%
%
\documentclass[11pt, a4paper]{article}
\usepackage[latin1]{inputenc}
\usepackage[T1]{fontenc}
\usepackage[english]{babel}
\selectlanguage{english}
\usepackage{amssymb, amsmath, amsthm}
\usepackage[colorlinks=true, allcolors=blue]{hyperref}
\usepackage{geometry}
\usepackage{fancyhdr}
\usepackage{relsize}
\usepackage{upgreek}
\usepackage{verbatim}
\usepackage{enumitem}
\usepackage{calrsfs}
\usepackage{todonotes}
\usepackage{tikz}
\usetikzlibrary{matrix,arrows,calc}
\usepackage{package}
%
%
%
\geometry{hscale=.8, headheight=14pt}
%
%
\pagestyle{fancy}
\lhead{}
\rhead{\nouppercase{\rightmark}}

\newcommand{\uppertri}{Z_{d,e}}
\newcommand{\uppertripq}{Z_{p,q}}
\DeclareMathOperator{\Exp}{Exp}
\DeclareMathOperator{\prim}{prim}
\makeatletter
\renewcommand*{\@fnsymbol}{\@arabic}
\makeatother
\title{Semi-Stable Chow--Hall Algebras of Quivers and Quantized Donaldson--Thomas Invariants}
\author{H. Franzen%
\thanks{Mathematisches Institut der Universit\"at Bonn, Endenicher Allee 60, 53115 Bonn\newline \href{mailto:franzen@math.uni-bonn.de}{franzen@math.uni-bonn.de}}
\and M. Reineke%
\thanks{Faculty of Mathematics, Ruhr-Universit\"at Bochum, Universit\"atsstra{\ss}e 150, 44780 Bochum\newline \href{mailto:markus.reineke@rub.de}{markus.reineke@rub.de}}%
}

\date{}
%
%
%
%
\begin{document}

	\maketitle
	\begin{abstract}
		The semi-stable ChowHa of a quiver with stability is defined as an analog of the Cohomological Hall algebra of Kontsevich and Soibelman via convolution in equivariant Chow groups of semi-stable loci in representation varieties of quivers. We prove several structural results on the semi-stable ChowHa, namely isomorphism of the cycle map, a tensor product decomposition, and a tautological presentation. For symmetric quivers, this leads to an identification of their quantized Donaldson--Thomas invariants with the Chow--Betti numbers of moduli spaces.
	\end{abstract}
	
	\section{Introduction}
	
	The Cohomological Hall algebra, or CoHa for short, of a quiver is defined in \cite{KS:11} as an analog of the Hall algebra construction of \cite{Ringel:90} in equivariant cohomology of representation varieties. In \cite{KS:11} the CoHa serves as a tool for the study of quantized Donaldson--Thomas invariants of quivers, and in particular their integrality properties, since it admits a purely algebraic description as a shuffle algebra ``with kernel'' on spaces of symmetric polynomials. In \cite{Efimov:12}, the CoHa of a symmetric quiver is shown to be a free super-commutative algebra, proving the positivity of quantized Donaldson--Thomas invariants in this case.
	
	In another direction, the CoHa is used in \cite{Franzen:13:NCHilb_Loop,Franzen:15:CoHa_Modules} to determine the ring structure on the cohomology of non-commutative Hilbert schemes and more general framed moduli spaces of quiver representations, as defined in \cite{ER:09}.
	
	Already in \cite{Franzen:15:CoHa_Modules} it turns out that a ``local'' version of the CoHa (the semi-stable CoHa), constructed via convolution on semistable loci of representation varieties with respect to a stability, is particularly useful, and that it is also convenient to replace equivariant cohomology by equivariant Chow groups.
	
	In the present paper, we study this local version, called the semi-stable ChowHa, more systematically and demonstrate their utility both for understanding the structure of the CoHa and for the study of quantized Donaldson--Thomas invariants.
	
	We prove the following structural properties of the semi-stable ChowHa:
	
	The equivariant cycle map between the semi-stable ChowHa and the semi-stable CoHa is an isomorphism (Corollary \ref{c:cyclemap}), which can be viewed as a generalization of a result of \cite{KW:95} on the cycle map for fine moduli spaces of quivers. We exhibit a tensor product decomposition (Theorem \ref{t:tensor}) of the CoHa into all semi-stable CoHa's for various slopes of the stability, categorifying the Harder--Narasimhan, or wall-crossing, formula of \cite{Reineke:03}. We give a ``tautological'' presentation of the semi-stable ChowHa (Theorem \ref{taut}), in the spirit of \cite{Franzen:15:Chow_Ring_Quiv}, which generalizes the algebraic description of \cite{KS:11} of the CoHa. Quite surprisingly, such a tautological presentation remains valid for the equivariant Chow groups of stable loci in representation varieties (Theorem \ref{prim}). From this, we conclude that the quantized Donaldson--Thomas invariants of a symmetric quiver are given by the Poincar\'e polynomials of the Chow groups of moduli spaces of stable quiver representations (Theorem \ref{DT=ChowBetti}). This shows that quantized Donaldson--Thomas invariants are of algebro-geometric origin; compare \cite{MR:14} where quantized Donaldson--Thomas invariants are interpreted via intersection cohomology of moduli spaces of semi-stable quiver representations.
	
	The proofs of these structural results basically only use the Harder--Narasimhan stratification of representation varieties of \cite{Reineke:03}, properties of equivariant Chow groups, and the result of Efimov \cite{Efimov:12}. All structural results are illustrated by examples in Section \ref{s:examples}: we first give a complete description of the Hall algebra of a two-cycle quiver, which is the only symmetric quiver with known representation theory apart from the trivial and the one-loop quiver whose CoHa's are already described in \cite{KS:11}. Then we consider the only non-trivial (i.e.\ not isomorphic to the CoHa of a trivial quiver) semi-stable ChowHa for the Kronecker quiver --- we observe that it is not super-commutative, but still has the same Poincar\'e--Hilbert series as a free super-commutative algebra; for this, the representation theory of the Kronecker quiver is used essentially. Then we illustrate the calculation of Chow--Betti numbers of moduli spaces of stable representations in the context of classical invariant theory, and finally hint at an algebraic derivation of the explicit formula of \cite{Reineke:12} for quantized Donaldson--Thomas invariants of multiple loop quivers.

	The paper is organized as follows:
	
	After reviewing basic facts on quiver representations (Section \ref{s:reminder}) and the definition of quantized Donaldson--Thomas invariants (Section \ref{DT}), we define the semi-stable ChowHa in Section \ref{s:sstchowha}. In Section \ref{s:chowhavscoha} we study the cycle map from ChowHa to CoHa, and prove it to be an isomorphism via induction over Harder--Narasimhan strata and framing techniques. The Harder--Narasimhan stratification is also used in Section \ref{s:tensorprod} to derive the tensor product decomposition of the ChowHa. We recall the algebraic description of the CoHa of \cite{KS:11} and Efimov's theorem in Section \ref{s:structurecoha}. Again using the Harder--Narasimhan stratifiction, we obtain the algebraic description of the semi-stable ChowHa in Section \ref{s:taut}. In Section \ref{s:primitive}, the previous results are combined to identify quantized Donaldson--Thomas invariants and Chow--Betti numbers. Finally, the examples mentioned above are developed in Section \ref{s:examples}.
	
	\subsection*{Acknowledgements}
	The authors would like to thank M.\ Brion, B.\ Davison, M.\ Ehrig, and M.\ Young for valuable discussions and remarks. While doing this research, H.F.\ was supported by the DFG SFB / Transregio 45 ``Perioden, Modulr\"aume und Arithmetik algebraischer Variet\"aten''.

	\section{A Reminder on Quiver Representations}\label{s:reminder}
	
	Let $Q$ be a quiver --- i.e.\ a finite oriented graph --- whose set of vertices resp.\ arrows we denote by $Q_0$ resp.\ $Q_1$. We will often suppress the dependency on $Q$ in the notation. The bilinear form $\chi = \chi_Q$ on $\Z^{Q_0}$ defined by
	$$
		\chi(d,e) = \sum_{i \in Q_0} d_ie_e - \sum_{\alpha: i \to j} d_ie_j = \sum_{i,j} (\delta_{i,j} - a_{i,j})d_ie_j
	$$ 
	is called the Euler form of $Q$. Here, $a_{i,j}$ is the number of arrows from $i$ to $j$ in $Q$. We denote the anti-symmetrization $\chi(d,e) - \chi(e,d)$ of the Euler form by $\linspan{d,e}$. Let $\Gamma = \smash{\Z_{\geq 0}^{Q_0}}$ be the monoid of dimension vectors of $Q$. 
	
	Let $k$ be a field. A representation $M$ of $Q$ over $k$ is a collection of finite-dimensional vector spaces $M_i$ with $i \in Q_0$ together with linear maps $M_\alpha: M_i \to M_j$ for every arrow $\alpha: i \to j$. See \cite{ASS:06} for more details. The tuple $\dimvect M = (\dim M_i \mid i \in Q_0)\in\Gamma$ is called the dimension vector of $M$. For a dimension vector $d\in\Gamma$, we define $R_d(k)$ to be the vector space
	$$
		R_d(k) = \bigoplus_{\alpha: i \to j} \Hom(k^{d_i},k^{d_j})
	$$
	on which we have an action of the group $G_d(k) = \prod_{i \in Q_0} \Gl_{d_i}(k)$ via base change. An element of $R_d(k)$ is a representation of $Q$ on the vector spaces $\smash{(k^{d_i})}_i$. Being an affine space, $R_d(k)$ admits a $\Z$-model, i.e.\ there exists a scheme $R_d$ whose set of $k$-valued points is $R_d(k)$. Likewise, there is a group scheme $G_d$ which is a $\Z$-model for $G_d(k)$.
	
	We introduce a stability condition $\theta$ of $Q$, that is, a linear form $\Z^{Q_0} \to \Z$. For a non-zero dimension vector $d$, the rational number
	$$
		\frac{\theta(d)}{\sum_i d_i}
	$$
	is called the $\theta$-slope of $d$. For a rational number $\mu$, let $\Gamma^{\theta,\mu}$ be the submonoid of all $d \in \Gamma$ with $d = 0$ or whose $\theta$-slope is $\mu$. If $M$ is a non-zero representation of $M$ of $Q$ over $k$, the $\theta$-slope of $M$ is defined as the slope of its dimension vector.
	A representation $M$ of $Q$ over $k$ is called $\theta$-\textbf{semi-stable} if no non-zero subrepresentation of $M$ has larger $\theta$-slope than $M$. It is called $\theta$-\textbf{stable} if the $\theta$-slope of every non-zero subrepresentation $M'$ is strictly less than the slope of $M$, unless $M'$ agrees with $M$. There is a Zariski-open subset $\smash{R_d^{\theta-\sst}}$ of the scheme $R_d$ whose set of $k$-valued points is the set of $\theta$-semi-stable representations of $Q$. There is also an open subset $\smash{R_d^{\theta-\st}}$ of $\smash{R_d^{\theta-\sst}}$ parametrizing absolutely $\theta$-stable representations, that means $\smash{R_d^{\theta-\st}}(k)$ consists of those $M \in R_d(k)$ such that $M \otimes_k K$ is $\theta$-stable for every finite extension $K \mid k$.
	
	\section{Quantum Donaldson--Thomas Invariants} \label{DT}
	
	Fix a prime power $q$ and let $\F = \F_q$ be the finite field with $q$ elements. Then $R_d(\F)$ and $G_d(\F)$ are finite sets, and we consider the set of orbits $R_d(\F)/G_d(\F)$. We define the \textbf{completed Hall algebra} of $Q$ as the vector space
	$$
		H_q = H_q((Q)) = \{ f \mid f: \bigsqcup_{d \in \Gamma} R_d(\F)/G_d(\F) \to \Q \}
	$$
	equipped with the following convolution-type multiplication: For two functions $f$ and $g$, we define
	$$
		(f*g)(X) = \sum_{U \sub X} f(U) g(X/U),
	$$
	the sum ranging over all subrepresentations of $X$. Note that this sum is finite. This multiplication turns $H_q$ into an associative algebra. We define yet another algebra. Set $\T_q := \Q(q^{1/2})[[t_i \mid i \in \Q_0]]$ and let the multiplication be given by
	$$
		t^d \circ t^e = (-q^{1/2})^{\linspan{d,e}} t^{d+e}.
	$$
	It is shown in \cite{Reineke:03} that the so-called integration map $\int: H_q \to \T_q$ defined by
	$$
		\int f = \sum_{[X]} \frac{(-q^{1/2})^{\chi(\dimvect X, \dimvect X)}}{\sharp \Aut(X)} \cdot f(X) \cdot t^{\dimvect X}
	$$
	is a homomorphism of algebras. Define $\one \in H_q$ to be the function with $\one(X) = 1$ for all $[X]$. An easy computation shows that $A(q,t) := \int \one$ equals
	$$
		A(q,t) = \sum_d (-q^{1/2})^{-\chi(d,d)} \prod_i \prod_{\nu = 1}^{d_i} (1-q^{-\nu})^{-1} t^d.
	$$
	
	For a stability condition $\theta$ of $Q$ and a rational number $\mu$, we define $\one^{\theta,\mu} \in H_q$ as the sum of the characteristic functions on $\smash{R_d^{\theta-\sst}}(\F)/G_d(\F)$ over all $d \in \Gamma^{\theta,\mu}$. Set $A^{\theta,\mu}(q,t) = \int \one^{\theta,\mu}$. Using a Harder--Narasimhan type recursion, it is shown in \cite{Reineke:03} that
	
	\begin{thm}[\cite{Reineke:03}] \label{wcf}
		In $H_q$, we have $\one = {\prod\limits^\ot}_{\mu \in \Q} \one^{\theta,\mu}$.
	\end{thm}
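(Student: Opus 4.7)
The plan is to prove the identity pointwise, evaluating both sides at an arbitrary isomorphism class $[X]$ of representations. By construction $\one(X) = 1$, so the task reduces to showing that the ordered infinite product, evaluated at every $X$, also yields $1$. The key input is the existence and uniqueness of the Harder--Narasimhan filtration of $X$ with respect to $\theta$.

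The first step is to make sense of $\prod^\ot_{\mu} \one^{\theta,\mu}$: one orders the rationals as $\mu_1 > \mu_2 > \ldots$, matching the bottom-to-top ordering of HN quotients. Since $\one^{\theta,\mu}(0) = 1$ for every $\mu$, once one truncates the product below the smallest slope that can occur among subquotients of filtrations of $X$, further factors contribute only a trivial $1$; hence evaluation at $X$ reduces to a finite convolution. The second step is to unwind this convolution explicitly:
\[
	\Bigl( {\prod\nolimits^\ot_{\mu}} \one^{\theta,\mu} \Bigr)(X) = \sum_{0 = X_0 \sub X_1 \sub \ldots \sub X_r = X} \prod_k \one^{\theta,\mu_k}(X_k/X_{k-1}),
\]
where the sum ranges over flags of subrepresentations, allowing repetitions $X_{k-1} = X_k$ (which contribute a trivial factor). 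Each summand takes value in $\{0,1\}$ and equals $1$ exactly when every non-zero quotient $X_k/X_{k-1}$ is $\theta$-semi-stable of slope $\mu_k$. Since the $\mu_k$ are strictly decreasing, any such flag, after collapsing trivial steps, is a Harder--Narasimhan filtration of $X$; conversely the HN filtration of $X$ supplies exactly such a contributing flag.

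The third step is to invoke the classical theorem that every representation admits a \emph{unique} HN filtration with respect to $\theta$, so the sum contains exactly one non-zero term, giving the value $1 = \one(X)$. The main obstacle is conceptual rather than computational: one must rigorously interpret the ordered infinite product in the completed Hall algebra $H_q$ and verify that the convolution really encodes filtrations with prescribed semi-stable subquotients of prescribed slopes. Once this bookkeeping is set up, the theorem becomes a categorified restatement of the existence and uniqueness of the HN filtration.
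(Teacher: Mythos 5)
Your argument is correct: interpreting the ordered product degreewise (only finitely many slopes occur among sub-dimension vectors of $\dimvect X$, and each factor $\one^{\theta,\mu}$ has value $1$ on the zero representation, so the product converges in $H_q$ and evaluation at $X$ is a finite convolution over flags with semi-stable subquotients of strictly decreasing slopes), the identity reduces to existence and uniqueness of the Harder--Narasimhan filtration. The paper gives no proof of this statement --- it cites \cite{Reineke:03}, where the result is obtained by exactly this Harder--Narasimhan recursion --- so your proposal matches the intended argument.
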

	
	This implies that the the series $A$ and $A^{\theta,\mu}$ relate in the same way in the twisted power series ring $\T_q$.
	
	Let $R$ be the power series ring $\Q(q^{1/2})[[t_i \mid i \in Q_0]]$ with the usual multiplication. Let $R_+$ be the set of power series without constant coefficient. There exists a unique bijection $\Exp: R_+ \to 1 + R_+$ such that $\Exp(f + g) = \Exp(f)\Exp(g)$ and
	$$
		\Exp(q^{k/2}t^d) = \frac{1}{1-q^{k/2}t^d}
	$$
	for every $k \in \Z$ and $d \in \Gamma$. This function is called the plethystic exponential. We call the stability condition $\theta$ generic for the slope $\mu\in \Q$ (or $\mu$-generic) if $\linspan{d,e} = 0$ for all $d,e \in \Gamma^{\theta,\mu}$. Assuming that $\theta$ is $\mu$-generic, the series $A^{\theta,\mu}$ can be displayed as a plethystic exponential:
	
	\begin{thm}[{\cite{KS:08}}] \label{dt_thm}
		For a $\mu$-generic stability condition $\theta$, there are polynomials $\tilde{\Omega}_d^\theta(q) = \sum_k \tilde{\Omega}_{d,2k}^\theta q^k$ in $\Z[q]$ for every non-zero dimension vector $d$ of slope $\mu$ such that
		$$
			A^{\theta,\mu}(q^{-1},t) = \Exp\left( \frac{1}{1-q}\sum_d  (-q^{1/2})^{\chi(d,d)} \tilde{\Omega}_d^\theta(q)t^d \right).
		$$
	\end{thm}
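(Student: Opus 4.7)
The plan has three stages. First, since $\theta$ is $\mu$-generic, $\linspan{d,e}=0$ for all $d,e\in\Gamma^{\theta,\mu}$, and so the twist $(-q^{1/2})^{\linspan{d,e}}$ trivializes on the sub-monoid $\Gamma^{\theta,\mu}$. Hence $A^{\theta,\mu}(q^{-1},t)$ lies in a commutative power series subring of $\T_q$, which I identify with the ordinary sub-power-series ring $R^\mu\subset R$ spanned by $\{t^d:d\in\Gamma^{\theta,\mu}\}$. Its constant term is $1$, so the bijectivity of $\Exp\colon R^\mu_+\to 1+R^\mu_+$ yields a unique $B\in R^\mu_+$ with $A^{\theta,\mu}(q^{-1},t)=\Exp(B)$. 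Writing $B(q,t)=\sum_d B_d(q)\,t^d$ and setting
$$
\tilde\Omega_d^\theta(q):=(1-q)(-q^{1/2})^{-\chi(d,d)}B_d(q)\in\Q(q^{1/2})
$$
makes the displayed identity of the theorem hold tautologically, so the content reduces to showing $\tilde\Omega_d^\theta\in\Z[q]$.

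Second, $B$ is to be determined recursively. Applying the integration homomorphism $\int$ to Theorem \ref{wcf} yields an ordered-by-slope factorization of $A(q,t)$ in $\T_q$ into the factors $A^{\theta,\mu}(q,t)$. Since the coefficients of $A$ are given explicitly in Section \ref{DT}, inverting this factorization by induction on slope expresses each coefficient of $A^{\theta,\mu}$ as a rational function in $q^{1/2}$. Expanding the right-hand side of the theorem as
$$
\prod_e\Exp\Bigl(\frac{(-q^{1/2})^{\chi(e,e)}}{1-q}\tilde\Omega_e^\theta(q)\,t^e\Bigr)
$$
and matching the coefficient of $t^d$ then determines $\tilde\Omega_d^\theta$ as a universal $\Z[q^{\pm 1/2}]$-polynomial expression in the coefficients of $A^{\theta,\mu}(q^{-1},t)$ of dimension vector at most $d$ together with the previously computed $\tilde\Omega_e^\theta$ for $e<d$ componentwise.

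The main obstacle is the integrality step: a priori the $\tilde\Omega_d^\theta$ produced by this recursion lie only in $\Q(q^{1/2})$, and I must show that half-integer powers of $q$, negative powers of $q$, and rational denominators all cancel. A direct combinatorial verification from the recursion appears intractable; the route I would take is categorification. The slope-$\mu$ semi-stable CoHa, defined in later sections of the paper, is a graded algebra whose Hilbert--Poincaré series equals $A^{\theta,\mu}(q^{-1},t)$, and $\mu$-genericity forces it to be super-commutative because the sign twist governing its convolution product trivializes on the slope-$\mu$ sector. Invoking Efimov's theorem \cite{Efimov:12} in the symmetric case, or the quantum-torus analysis of \cite{KS:08} in the general $\mu$-generic case, realizes this CoHa as a free super-commutative algebra on a graded vector space $P^{\theta,\mu}=\bigoplus_d P^{\theta,\mu}_d$ of primitive generators. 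Comparing Hilbert series then identifies $\tilde\Omega_d^\theta(q)$ with the Poincaré polynomial of $P^{\theta,\mu}_d$, establishing integrality (and even non-negativity) simultaneously.
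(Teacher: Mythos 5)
The paper does not actually prove this statement: it is imported from \cite{KS:08} and used as a black box (the integrality of $\smash{\tilde\Omega_d^\theta}$ for $\mu$-generic stabilities is a deep theorem in its own right). So the question is whether your blind argument would establish it. Your first two stages are fine and correctly isolate the content: by $\mu$-genericity the slope-$\mu$ sector of $\T_q$ is commutative, $\Exp$ is a bijection $R_+\to 1+R_+$, so the displayed identity holds tautologically for unique coefficients $\tilde\Omega_d^\theta\in\Q(q^{1/2})$, and everything reduces to showing these lie in $\Z[q]$; the recursion of your second stage computes them but carries no integrality information.

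The gap is in your third stage. The claim that $\mu$-genericity forces the slope-$\mu$ semi-stable CoHa to be super-commutative is false. Comparing $f*g$ with $g*f$ in the shuffle description involves the kernel $\prod(x_{j,\cdot}-x_{i,\cdot})^{a_{i,j}-\delta_{i,j}}$, which behaves well under exchanging the two factors only when the quiver is symmetric ($a_{i,j}=a_{j,i}$), not merely when $\linspan{d,e}=0$ on $\Gamma^{\theta,\mu}$. The paper's own Kronecker computation is a direct counterexample: $\theta$ is $1/2$-generic for $K_2$, yet the (anti-)commutator of $1$ and $x$ in $\AA^{\theta-\sst,1/2}(K_2)$ is shown there to be non-zero, so that algebra is not super-commutative; that its Hilbert series nevertheless agrees with a free super-commutative algebra is proved using the explicit representation theory of $K_2$, not a general principle. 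Efimov's theorem applies only to the full CoHa of a \emph{symmetric} quiver, so it cannot be invoked for an arbitrary quiver with a $\mu$-generic stability (and even in the symmetric case one must pass through Theorem \ref{wcf} to reach arbitrary $\theta$). Your remaining fallback, ``the quantum-torus analysis of \cite{KS:08}'', amounts to citing the result you are trying to prove. Hence the integrality step --- the entire content of the theorem --- is not established by your argument; in the literature it rests on genuinely different input (e.g.\ Harder--Narasimhan recursions combined with the geometry of smooth models, or the identification with intersection cohomology of moduli spaces), not on super-commutativity of the semi-stable CoHa.
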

	
	\begin{defn*}
		If $\theta$ is a $\mu$-generic stability condition and $d$ is a non-zero dimension vector of slope $\mu$ then the coefficients of
		$$
			\Omega_d^{\theta}(q) = \sum_{k \in \Z} \Omega_{d,k}^\theta q^{k/2} := q^\frac{\chi(d,d)}{2}\tilde{\Omega}_d^\theta(q) \in \Z[q^{\pm 1/2}]
		$$
		are called the \textbf{quantum Donaldson--Thomas invariants} of $Q$ with respect to $\theta$.
	\end{defn*}
	
	When the quiver $Q$ is symmetric, that means the Euler form of $Q$ is a symmetric bilinear form, then every stability condition is $\mu$-generic for every value $\mu \in \Q$. For example, the trivial stability condition is $0$-generic and we can define the Donaldson--Thomas invariants $\smash{\Omega_{d,k}^0}$ for every $d \neq 0$. Note that Theorem \ref{wcf} implies $\smash{\Omega_{d,k}^0} = \smash{\Omega_{d,k}^\theta}$ for every $\theta$. We may therefore write $\Omega_{d,k}$ in this case.

	\section{The Semi-Stable ChowHa}\label{s:sstchowha}
	
	Fix an algebraically closed field $k$. Abusing the notation from the first section, we will use the symbols $R_d$, $\smash{R_d^{\theta-\sst}}$, $\smash{R_d^{\theta-\st}}$, and $G_d$ for the base extensions of the respective $\Z$-models to $\Spec k$.
	
	Let $d$ be a dimension vector for $Q$. We define $\AA_d^{\theta-\sst}$ to be the $G_d$-equivariant Chow ring with rational coefficients of the semi-stable locus
	$$
		\AA_d^{\theta-\sst}(Q) = A_{G_d}^*(R_d^{\theta-\sst})_\Q.
	$$
	For the definition of equivariant Chow groups/rings, see Edidin--Graham's article \cite{EG:98}. As we will always work with rational coefficients, we will often omit it in the notation. We define $\AA^{\theta-\sst,\mu}$ as the graded vector space
	$$
		\AA^{\theta-\sst,\mu}(Q) = \bigoplus_{d \in \Gamma^{\theta,\mu}} \AA_d^{\theta-\sst}
	$$
	We mimic Kontsevich--Soibelman's construction of the Cohomological Hall algebra (CoHa) of a quiver with stability and trivial potential from \cite{KS:11}.
	
	For two dimension vectors $d$ and $e$ of the same slope $\mu$, we set $\uppertri$ as the subspace of $R_{d+e}$ of representations $M$ which have a block upper triangular structure as indicated, i.e.\ for every arrow $\alpha: i \to j$, the linear map $M_\alpha$ sends the first $d_i$ coordinate vectors of $k^{d_i+e_i}$ into the subspace of $k^{d_j+e_j}$ spanned by the first $d_j$ coordinate vectors. We consider
	$$
		R_d \times R_e \ot \uppertri \to R_{d+e},
	$$
	the left hand map sending a representation $M = \big(\smash{\begin{smallmatrix} M' & * \\ & M'' \end{smallmatrix}}\big)$ to the pair $(M',M'')$, and the right hand map being the inclusion. These maps are called Hecke correspondences. For a short exact sequence $0 \to M' \to M \to M'' \to 0$ of representations \emph{of the same slope}, $M$ is $\theta$-semi-stable if and only if both $M'$ and $M''$ are. We thus obtain cartesian squares
	\begin{center}
		\begin{tikzpicture}[description/.style={fill=white,inner sep=2pt}]
			\matrix(m)[matrix of math nodes, row sep=.2em, column sep=2em, text height=1.5ex, text depth=0.25ex]
			{
				R_d \times R_e & \uppertri & R_{d+e} \\
				\rotatebox{90}{$\sub$} & \rotatebox{90}{$\sub$} & \rotatebox{90}{$\sub$} \\
				R_d^{\sst} \times R_e^{\sst} & \uppertri \cap R_{d+e}^{\sst} & R_{d+e}^{\sst}. \\
			};
			\path[->, font=\scriptsize]
			(m-1-2) edge (m-1-1)
			(m-1-2) edge (m-1-3)
			(m-3-2) edge (m-3-1)
			(m-3-2) edge (m-3-3)
			;
		\end{tikzpicture}
	\end{center}
	The action of $G = G_{d+e}$ on $R_{d+e}$ restricts to an action of the parabolic $P = \big(\smash{\begin{smallmatrix} G_d & * \\ & G_e \end{smallmatrix}}\big)$ on $\smash{\uppertri}$, and the map $\smash{\uppertri} \to R_d \times R_e$ is compatible with the action of its Levi $L = G_d \times G_e$ on $R_d \times R_e$. With respect to these actions, the respective semi-stable loci are invariant. This gives rise to morphisms
	$$
		(R_d^{\sst} \times R_e^{\sst}) \times^L G \ot \uppertri^{\sst} \times^L G \to \uppertri^{\sst} \times^P G \to R_{d+e}^{\sst} \times^P G \to R_{d+e}.
	$$
	We see that
	\begin{itemize}
		\item $(R_d^{\sst} \times R_e^{\sst}) \times^L G \ot \uppertri^{\sst} \times^L G$ is a $G$-equivariant (trivial) vector bundle,
		\item $\uppertri^{\sst} \times^L G \to \uppertri^{\sst} \times^P G$ is a fibration whose fiber $P/L$ is an affine space (as $L$ is the Levi of $P$ in $G$) and thus induces an isomorphism in $G$-equivariant intersection theory,
		\item $\uppertri^{\sst} \times^P G \to R_{d+e}^{\sst} \times^P G$ is the zero section of a $G$-equivariant vector bundle whose rank is $s_1 = \sum_{\alpha: i \to j} d_i e_j$, and
		\item $R_{d+e}^{\sst} \times^P G \to R_{d+e}$ is proper as $G/P$ is complete, and the dimension of $G/P$ is $s_0 = \sum_i d_ie_i$.
	\end{itemize}
	The above morphisms give rise to maps in equivariant intersection theory \label{coha_mult}
	$$
		A_L^n(R_d^{\sst} \times R_e^{\sst}) \xto{}{\cong} A_L^n \big( \uppertri^{\sst} \big) \xot{}{\cong} A_P^n \big( \uppertri^{\sst} \big) \to A_P^{n+s_1}(R_{d+e}^{\sst}) \to A_G^{n+s_1-s_0}(R_{d+e}^{\sst}).
	$$
	Note that $s_1-s_0$ precisely equals $-\chi(d,e)$, the negative of the Euler form of $d$ and $e$. Composing with the equivariant exterior product map $A_{G_d}^*(R_d^{\sst}) \otimes A_{G_e}^*(R_e^{\sst}) \to A_L^*(R_d^{\sst} \times R_e^{\sst})$, we obtain a linear map
	$$
		\AA_d^{\sst} \otimes \AA_e^{\sst} \to \AA_{d+e}^{\sst}.
	$$
	The proof of \cite[Thm.\ 1]{KS:11} also shows that we thus obtain an associative $\Gamma^{\theta,\mu}$-graded algebra. In analogy to Kontsevich--Soibelman's terminology, we define:
	
	\begin{defn*}
		The algebra $\AA^{\theta-\sst,\mu}(Q)$ is called the $\theta$-semi-stable \textbf{Chow--Hall algebra} (ChowHa) of slope $\mu$ of $Q$.
	\end{defn*}
	
	For the special case that $\theta$ is zero (i.e.\ $\smash{R_d^{\sst}} = R_d$) and $\mu = 0$, we write $\AA$ instead of $\AA^{0-\sst,0}$ and call it the \textbf{ChowHa} of $Q$.
	
	\section{ChowHa vs.\ CoHa}\label{s:chowhavscoha}
	
	We discuss the relation between the semi-stable ChowHa and Kontsevich--Soibelman's semi-stable CoHa. Let $k$ be the field of complex numbers. There is an equivariant analog (see \cite[2.8]{EG:98}) of the cycle map from \cite[Chap.\ 19]{Fulton:98}. Concretely, there is a homomorphism of rings which doubles degrees
	$$
		A_{G_d}^*(\smash{R_d^{\theta-\sst}}) \to H_{G_d}^*(\smash{R_d^{\theta-\sst}})
	$$
	for every stability condition $\theta$ and every dimension vector $d$.
	
	\begin{thm} \label{cyc}
		The equivariant cycle map $A_{G_d}^*(\smash{R_d^{\theta-\sst}}) \to H_{G_d}^*(\smash{R_d^{\theta-\sst}})$ is an isomorphism. In particular, $\smash{R_d^{\theta-\sst}}$ has no odd-dimensional $G_d$-equivariant cohomology.
	\end{thm}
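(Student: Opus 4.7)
The plan is to induct on the total dimension $|d| = \sum_i d_i$, with trivial base case $d = 0$. Suppose the conclusion of the theorem holds for every $d'$ with $|d'| < |d|$. The key input is the Harder--Narasimhan stratification
\[
	R_d = R_d^{\theta-\sst} \sqcup \bigsqcup_{d_*} S_{d_*},
\]
where $d_*$ ranges over non-trivial HN types of total dimension $d$; for $d_* = (d^1, \ldots, d^s)$ with $s \geq 2$ and strictly decreasing $\theta$-slopes, the stratum $S_{d_*}$ is a smooth $G_d$-invariant locally closed subvariety, $G_d$-equivariantly isomorphic to an affine bundle over $G_d \times^{P_{d_*}}(R_{d^1}^{\theta-\sst} \times \cdots \times R_{d^s}^{\theta-\sst})$, where $P_{d_*}$ is the block upper-triangular parabolic with Levi $L_{d_*} = G_{d^1} \times \cdots \times G_{d^s}$.

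First I would dispose of the ambient space: $R_d$ is a $G_d$-equivariantly contractible affine space and $G_d$ is a product of general linear groups, so $A^*_{G_d}(R_d) \cong A^*(BG_d)$ and $H^*_{G_d}(R_d) \cong H^*(BG_d)$ are polynomial rings concentrated in even degrees, identified by the cycle map. On each non-trivial HN stratum $S_{d_*}$, the standard reductions $A^*_{G_d}(G_d \times^{P_{d_*}} Y) = A^*_{P_{d_*}}(Y) = A^*_{L_{d_*}}(Y)$ (the second identity using that the unipotent radical of $P_{d_*}$ is affine), combined with homotopy invariance under the affine bundle projection, reduce the computation to the $L_{d_*}$-equivariant intersection theory of $R_{d^1}^{\theta-\sst} \times \cdots \times R_{d^s}^{\theta-\sst}$; the analogous identities hold in equivariant cohomology. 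Applying the inductive hypothesis to each $|d^i| < |d|$, together with a K\"unneth isomorphism available over $\Q$ because each factor has cycle-iso equivariant cohomology concentrated in even degrees, yields that the cycle map is an isomorphism on $A^*_{G_d}(S_{d_*})$ and that $H^*_{G_d}(S_{d_*})$ vanishes in odd degrees.

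I would then run a descending induction over HN strata, starting from $R_d$. At each step a $G_d$-stable open $U \subseteq R_d$ for which the theorem is already established admits a deepest stratum $Z = S_{d_*}$ as a closed smooth subvariety of some pure codimension $c$, leaving the residual open $U \setminus Z$. The equivariant cohomology Gysin long exact sequence and the equivariant Chow localization sequence
\[
	A^{*-c}_{G_d}(Z) \to A^*_{G_d}(U) \to A^*_{G_d}(U \setminus Z) \to 0
\]
assemble into a commutative ladder via the cycle map. Since the Chow and cohomology of $Z$ and $U$ are concentrated in even degrees, the Gysin connecting homomorphism vanishes on parity grounds, the cohomology sequence splits into short exact sequences, the left arrow of the Chow sequence is injective by comparison, and the five-lemma transfers both the cycle-map isomorphism and even-degree concentration to $U \setminus Z$. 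After finitely many excisions the surviving open piece is precisely $R_d^{\theta-\sst}$, completing the induction.

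The principal technical obstacle is the K\"unneth step for the $L_{d_*}$-equivariant Chow of $R_{d^1}^{\theta-\sst} \times \cdots \times R_{d^s}^{\theta-\sst}$ with $\Q$-coefficients, which can fail in full generality but is available here precisely because the inductive hypothesis supplies evenly-graded cycle-iso equivariant cohomology on each factor, killing all higher Tor obstructions. The framing technique alluded to in the introduction furnishes an auxiliary device: for sufficiently positive framing data the framed representation variety admits a smooth projective fine moduli quotient to which the Kirwan--Weiss cycle-map isomorphism applies directly, which can substitute for or corroborate the K\"unneth step as needed.
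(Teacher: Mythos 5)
Your stratification argument reproduces the second half of the paper's proof, but the excision step cannot deliver the even-degree concentration of $H^{G_d}_*(R_d^{\theta-\sst})$, and that is where the proposal breaks down. For a closed $G_d$-invariant $Z\subseteq U$ with open complement $U\setminus Z$, the long exact sequence in (equivariant) Borel--Moore homology gives
$$
0=H_{2k+1}(U)\to H_{2k+1}(U\setminus Z)\to H_{2k}(Z)\xrightarrow{\ i_*\ } H_{2k}(U),
$$
so $H_{2k+1}(U\setminus Z)=\ker(i_*)$, and nothing in your hypotheses forces $i_*$ to be injective. The left-hand map of the Chow localization sequence $A_*(Z)\to A_*(U)\to A_*(U\setminus Z)\to 0$ is not injective in general, so "injective by comparison" is circular: via the cycle isomorphisms on $Z$ and $U$, its injectivity is \emph{equivalent} to the vanishing you are trying to establish. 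The simplest example already exhibits the failure of the general principle: $Z=\{0\}\subseteq U=\mathbb{A}^1$ has both pieces even with bijective cycle map, yet $H_1(\mathbb{G}_m)\cong\ker\big(H_0(\{0\})\to H_0(\mathbb{A}^1)\big)=\Z\neq 0$. This gap is fatal to your induction on $|d|$, because the odd-vanishing of the smaller semi-stable loci is precisely what is needed at the next stage --- both for the K\"unneth isomorphism on the HN strata and for the vanishing hypotheses in the excision lemma --- so the cycle-map statement and the evenness statement cannot be bootstrapped simultaneously from the stratification alone.

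The paper closes this gap with a separate first step that your proposal omits. For $\theta$-coprime $d$, semi-stability and stability coincide, there is a smooth geometric $PG_d$-quotient $M_d^\theta$ whose odd cohomology vanishes by Reineke's theorem, and hence $H_{G_d}^*(R_d^{\theta-\sst})\cong H^*(M_d^\theta)\otimes H_{\mathbb{C}^\times}^*(\pt)$ is even. For arbitrary $d$ one fixes a degree $k$ and chooses a framing datum $n$ so large that the complement of the framed-stable locus inside $R_d^{\theta-\sst}\times F_n$ has dimension below the relevant range, whence $H_k^{G_d}(R_d^{\theta-\sst})\cong H_{k+2d\cdot n}^{G_d}(\smash{\hat R^{\theta}_{d,n}})$ with the right-hand side falling under the coprime case. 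So the framing technique is not an optional corroboration of the K\"unneth step, as your last paragraph suggests; it is the essential ingredient supplying the odd-degree vanishing, without which the descending induction does not close. (Alternatively one could try to prove equivariant perfection of the HN stratification in the style of Atiyah--Bott--Kirwan, via non-vanishing of equivariant Euler classes of the normal bundles to the strata, but that is a substantive additional argument which you neither state nor supply.)
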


	This theorem generalizes a result due to King--Walter (cf.\ \cite[Thm.\ 3 (c)]{KW:95}). They show the above assertion for an acyclic quiver, an indivisible dimension vector and a stability condition for which stability and semi-stability coincide. In this case, there exists a geometric $PG_d$-quotient $\smash{R_d^{\theta-\mathrm{(s)st}}} \to \smash{M_d^{\theta}}$, and the $G_d$-equivariant Chow/cohomology groups agree with the tensor product of the ordinary Chow/cohomology groups of the quotient with a polynomial ring $\Q[z] \cong A_{\G_m}^*(\pt) \cong H_{\G_m}^*(\pt)$.
	
	We need a general lemma in order to prove Theorem \ref{cyc}. Let $X$ be a complex algebraic scheme embedded into a non-singular variety $\bar{X}$ of complex dimension $N$; for the rest of this section, a \emph{scheme} will be a complex algebraic scheme (cf.\ \cite[B.1.1]{Fulton:98}) which admits such an embedding. The Borel--Moore homology $H_k(X)$ is isomorphic to the singular cohomology $H^{2N-k}(\bar{X},\bar{X}-X)$. We consider the cycle map $\cl: A_k(X) \to H_{2k}(X)$.
	
	\begin{lem} \label{3space}
		Let $X$ be a scheme, $Y$ a closed subscheme of $X$ and $U$ the open complement.
		\begin{enumerate}
			\item If both $Y$ and $U$ have no odd-dimensional homology then $X$ does not have odd-dimensional homology.
			\item If $A_k(Y) \to H_{2k}(Y)$ and $A_k(U) \to H_{2k}(U)$ are isomorphisms and $H_{2k+1}(U) = 0$ then $A_k(X) \to H_{2k}(X)$ is an isomorphism.
			\item Suppose that $A_k(Y) \to H_{2k}(Y)$ and $A_k(X) \to H_{2k}(X)$ are isomorphisms and $H_{2k-1}(Y) = 0$. Then $A_k(U) \to H_{2k}(U)$ is also an isomorphism.
		\end{enumerate}
	\end{lem}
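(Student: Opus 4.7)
The plan is to work in the commutative ladder obtained by applying the cycle map to the two standard exact sequences attached to the triple $(Y, X, U)$: on top, Fulton's right-exact Chow localization sequence
$$
A_k(Y) \to A_k(X) \to A_k(U) \to 0,
$$
and on the bottom, the long exact sequence in Borel--Moore homology
$$
\cdots \to H_{k+1}(U) \to H_k(Y) \to H_k(X) \to H_k(U) \to H_{k-1}(Y) \to \cdots .
$$
All three assertions will follow by diagram chasing in this ladder. Part (1) is immediate from the three-term piece $H_{2k+1}(Y) \to H_{2k+1}(X) \to H_{2k+1}(U)$ of the bottom row, both of whose outer terms vanish by hypothesis.

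For part (2), the assumption $H_{2k+1}(U) = 0$ upgrades the relevant segment of the LES to a left-exact sequence $0 \to H_{2k}(Y) \to H_{2k}(X) \to H_{2k}(U) \to H_{2k-1}(Y)$. An elementary commutativity argument then shows that the boundary $\partial : H_{2k}(U) \to H_{2k-1}(Y)$ vanishes: the composition $A_k(X) \to A_k(U) \to H_{2k}(U) \xrightarrow{\partial} H_{2k-1}(Y)$ agrees with $A_k(X) \to H_{2k}(X) \to H_{2k}(U) \xrightarrow{\partial} H_{2k-1}(Y)$, which is zero by LES exactness, while the horizontal maps $A_k(X) \twoheadrightarrow A_k(U) \twoheadrightarrow H_{2k}(U)$ are both surjective. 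The bottom row is thus a short exact sequence, and a direct two-step chase --- which cannot be shortcut via the Five Lemma because the top row need not be left-exact --- yields the desired isomorphism on $X$.

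For part (3), the vanishing $H_{2k-1}(Y) = 0$ instead extends the bottom row to be right-exact: $H_{2k}(Y) \to H_{2k}(X) \to H_{2k}(U) \to 0$. Surjectivity of $\cl$ on $U$ follows from the isomorphism on $X$ by lifting classes along $H_{2k}(X) \twoheadrightarrow H_{2k}(U)$ and pushing down via $A_k(X) \twoheadrightarrow A_k(U)$. For injectivity, a class $\beta \in \ker \cl$ on $U$ is lifted to some $\beta' \in A_k(X)$; by exactness at $H_{2k}(U)$, the class $\cl(\beta')$ comes from $H_{2k}(Y) \cong A_k(Y)$, and subtracting the image of the resulting preimage from $\beta'$ produces an element of $\ker\cl$ on $X$, which vanishes by hypothesis and pushes to $\beta$.

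The only subtlety throughout is the absence of left-exactness in the top row, which blocks a direct appeal to the Five Lemma. The Borel--Moore vanishing hypotheses in parts (2) and (3) are precisely what is needed to sidestep this via the short chases sketched above; they either promote the bottom row to a short exact sequence or extend it to a right-exact one, exactly matching the right-exactness available on the Chow side.
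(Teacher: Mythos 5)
Your proof is correct and follows essentially the same route as the paper: a diagram chase in the ladder comparing Fulton's right-exact Chow localization sequence with the long exact sequence in Borel--Moore homology, where the vanishing hypotheses supply exactly the exactness missing on the Chow side (the paper compresses part (2) into an appeal to the snake lemma and writes out the chase for part (3)). The one slip --- in the injectivity step of (3) you invoke ``exactness at $H_{2k}(U)$'' where you mean exactness at $H_{2k}(X)$, i.e.\ that $\ker\bigl(H_{2k}(X) \to H_{2k}(U)\bigr) = \operatorname{im}\bigl(H_{2k}(Y) \to H_{2k}(X)\bigr)$ --- is purely cosmetic.
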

	
	\begin{proof}
		The first assertion is clear by the long exact sequence in homology. To prove the second statement, we consider the diagram
		\begin{center}
			\begin{tikzpicture}[description/.style={fill=white,inner sep=2pt}]
				\matrix(m)[matrix of math nodes, row sep=1.5em, column sep=1.5em, text height=1.5ex, text depth=0.25ex]
				{
					& & A_k(Y) & A_k(X) & A_k(U) & 0 & \\
					\ldots & H_{2k+1}(U) & H_{2k}(Y) & H_{2k}(X) & H_{2k}(U) & H_{2k-1}(Y) & \ldots \\
				};
				\path[->, font=\scriptsize]
				(m-1-3) edge 	(m-1-4)
				(m-1-4) edge 	(m-1-5)
				(m-1-5) edge 	(m-1-6)
				(m-2-1) edge 	(m-2-2)
				(m-2-2) edge 	(m-2-3)
				(m-2-3) edge 	(m-2-4)
				(m-2-4) edge	(m-2-5)
				(m-2-5) edge	(m-2-6)
				(m-2-6) edge	(m-2-7)
				(m-1-3) edge	(m-2-3)
				(m-1-4) edge	(m-2-4)
				(m-1-5) edge	(m-2-5)
				;
			\end{tikzpicture}
		\end{center}		
		The left and right vertical maps being isomorphisms and the leftmost term in the lower row being zero by assumption, the claim follows by applying the snake lemma. The third claim follows by diagram chase in the same diagram. We give the proof for completeness. Let $u \in H_{2k}(U)$. There exists $x \in H_{2k}(X)$ with $j^*x = u$ where $j: U \to X$ is the open embedding. We find a unique $\xi \in A_k(X)$ with $\cl_X\xi = x$, so $u = j^*\cl_X\xi = \cl_Uj^*\xi$ which proves the surjectivity of $\cl_U$. To show that $\cl_U$ is injective, let $\upsilon \in A_k(U)$ with $\cl_U\upsilon = 0$. For an inverse image $\xi \in A_k(X)$ of $\upsilon$ under $j^*$, we obtain $j^*\cl_X\xi = 0$, whence there exists $y \in H_{2k}(Y)$ such that $i_*y = \cl_X\xi$. Here $i: Y \to X$ denotes the closed immersion. Let $\eta \in A_k(Y)$ be the unique cycle with $\cl_Y\eta = y$. We get $\cl_Xi_*\eta = i_*y = \cl_X \xi$ and thus $i_*\eta = \xi$ by injectivity of $\cl_X$. This implies $\upsilon = j^*i_*\eta = 0$.
	\end{proof}
	
	An immediate consequence of the above lemma is the following
	
	\begin{lem} \label{filt}
		Suppose that a scheme $X$ has a filtration $X = X_N \supseteq \ldots \supseteq X_1 \supseteq X_0 = \emptyset$ by closed subschemes such that the cycle map for the successive complements $S_n = X_n - X_{n-1}$ is an isomorphism for all $n$. Then $\cl_X$ is an isomorphism and, moreover, we have
		$$
			A_*(X)\cong\bigoplus_{i=0}^NA_*(S_i)\mbox{ and } A^*(X)\cong\bigoplus_{i=0}^NA^{*-{\codim}_XS_i}(S_i).$$
	\end{lem}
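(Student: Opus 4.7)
The plan is to induct on the length $N$ of the filtration. The base case $N = 0$ is vacuous since $X_0 = \emptyset$.

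For the inductive step, I set $Y := X_{N-1}$, which is closed in $X = X_N$, with open complement $U := S_N$. The truncated filtration $X_{N-1} \supseteq \ldots \supseteq X_0 = \emptyset$ satisfies the hypotheses of the lemma with length $N - 1$, so by induction $\cl_Y$ is an isomorphism, $Y$ has no odd-dimensional Borel--Moore homology, and $A_*(Y) \cong \bigoplus_{i=0}^{N-1} A_*(S_i)$. Together with the assumption on $S_N$, read as saying that the cycle map is an isomorphism and that $S_N$ has no odd-dimensional homology (an interpretation which is automatic in the intended Harder--Narasimhan applications, where the strata are affine-space fibrations over cell-like bases), Lemma \ref{3space}(2) gives that $\cl_X$ is an isomorphism, and Lemma \ref{3space}(1) gives that $X$ itself has no odd-dimensional homology. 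Thus both inductive hypotheses propagate.

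For the direct-sum decomposition I would exploit that the Borel--Moore long exact sequence of the pair $(Y \subseteq X \supseteq U)$ degenerates into short exact sequences
$$0 \to H_{2k}(Y) \to H_{2k}(X) \to H_{2k}(U) \to 0,$$
since both $Y$ and $U$ have vanishing odd-dimensional homology. Via the cycle-map isomorphisms established above, this lifts to a short exact sequence $0 \to A_k(Y) \to A_k(X) \to A_k(U) \to 0$ of rational Chow groups, which splits as a sequence of $\Q$-vector spaces. Combining with the inductive decomposition of $A_*(Y)$ yields $A_*(X) \cong \bigoplus_{i=0}^N A_*(S_i)$. The cohomological formula follows by Poincar\'e duality $A^j(X) \cong A_{\dim X - j}(X)$ (valid in the intended smooth applications) together with the identity $\dim X - j = \dim S_i - (j - \codim_X S_i)$, which recasts $A_{\dim X - j}(S_i)$ as $A^{j - \codim_X S_i}(S_i)$.

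The main subtle point is the implicit vanishing of odd-dimensional Borel--Moore homology on each stratum $S_n$: this is what legitimizes the application of Lemma \ref{3space}(2) and what makes the Borel--Moore sequence split, yet it is not literally part of the stated hypothesis. In the applications in this paper, where the strata arise from Harder--Narasimhan decompositions of semi-stable loci and are iterated affine fibrations over products of lower-dimensional semi-stable loci, this vanishing is delivered automatically by a parallel induction on the length of the stratification.
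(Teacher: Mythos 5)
Your proof is correct and is exactly the argument the paper intends: the paper derives Lemma \ref{filt} as an ``immediate consequence'' of Lemma \ref{3space} via precisely this induction on the length of the filtration. Your observation that vanishing of the odd-dimensional Borel--Moore homology of the strata is implicitly required (both to invoke part (2) of Lemma \ref{3space} and to split the localization sequences) is apt; the paper leaves this tacit, and the vanishing is indeed supplied in all of its applications.
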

	
	We now turn to an equivariant setup. Let $G$ be a reductive linear algebraic group acting on a scheme $X$ of complex dimension $n$. For an index $i$, we choose a representation $V$ of $G$ and an open subset $E \sub V$ such that a principal bundle quotient $E/G$ exists and such that $\codim_V(V-E) > n-i$. Then, the group 
	$$
		A_k^G(X) = A_{i+\dim V-\dim G}(X \times^G E)
	$$
	is independent of the choice of $E$ and $V$. In the same vein, for an index $j$ with $2\codim_V(V-E) > 2n-j$, we can define equivariant Borel--Moore homology via ordinary Borel--Moore homology, namely
	$$
		H_j^G(X) = H_{j+2\dim V-2\dim G}(X \times^G E)
	$$
	(see \cite{EG:98}). If $X$ is smooth then $\smash{H_j^G(X)}$ is dual to $H^{2n-j}(X \times^G E)$ which is isomorphic to $H^{2n-j}(X \times^G EG) = H_G^{2n-j}(X)$ (where $EG$ is the classifying space for $G$). 
	
	We consider the equivariant cycle map $\cl: \smash{A_k^G(X)} \to \smash{H_{2k}^G(X)}$ which is defined as the ordinary cycle map $\cl: \smash{A_{i+\dim V-\dim G}(X \times^G E)} \to \smash{H_{2k+2\dim V-2\dim G}(X \times^G E)}$ (again independent of $E \sub V$). For complementary open/closed subschemes $U$ and $Y$ of $X$ which are $G$-invariant, we choose $E \sub V$ such that the principal bundle quotient $E/G$ exists and $\codim_V(V-E) > n-i$ (note that all the equivariant versions of the groups appearing in the diagram in the proof of Lemma \ref{3space} can be defined using $E$) and apply Lemma \ref{3space} to the complementary open/closed subschemes $U \times^G E$ and $Y \times^G E$. We thus obtain
	
	\begin{cor}\label{cor43} 
		In the above equivariant situation, Lemmas \ref{3space} and \ref{filt} hold for equivariant Chow/ Borel--Moore homology groups.
	\end{cor}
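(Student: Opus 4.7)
The plan is to reduce the equivariant statements directly to their non-equivariant counterparts via the Edidin--Graham approximation. Fix the degree $k$ of interest. By construction, the equivariant Chow groups $A_{k-1}^G, A_k^G$ and equivariant Borel--Moore homology groups $H_{2k-1}^G, H_{2k}^G, H_{2k+1}^G$ of each of $X, Y, U$ are computed as ordinary Chow, respectively Borel--Moore, groups of the mixed quotients $X \times^G E$, $Y \times^G E$, $U \times^G E$, for any $G$-representation $V$ and $G$-invariant open $E \subseteq V$ admitting a principal bundle quotient $E \to E/G$, provided $\codim_V(V-E)$ exceeds certain bounds depending on the degrees involved. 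Only finitely many such bounds enter the long exact sequence diagram of Lemma \ref{3space}, so a single pair $(V,E)$ suffices to compute every group in that diagram.

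Since $Y$ is a $G$-invariant closed subscheme of $X$ with open complement $U$, the scheme $Y \times^G E$ is closed in $X \times^G E$ with open complement $U \times^G E$. By construction, the equivariant long exact sequence and equivariant cycle map for the triple $(Y, X, U)$ coincide, up to the uniform index shift built into the Edidin--Graham definition, with the ordinary ones for $(Y \times^G E, X \times^G E, U \times^G E)$. Applying Lemma \ref{3space} to this non-equivariant triple therefore yields its equivariant analogue verbatim.

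For the equivariant form of Lemma \ref{filt}, I would iterate the equivariant three-space lemma along the finite filtration $X_0 \subseteq \ldots \subseteq X_N = X$. Since the filtration is finite, one may choose a single pair $(V,E)$ that simultaneously computes all equivariant groups of the $X_n$ and of the strata $S_n$ in the degrees that arise. At each step the equivariant Lemma \ref{3space} promotes the cycle-map isomorphisms on $X_{n-1}$ and $S_n$ to one on $X_n$, and vanishing of odd-dimensional equivariant Borel--Moore homology is inherited. The direct sum decompositions follow as in the non-equivariant case from the splitting of the resulting long exact sequences into short exact sequences forced by this vanishing, the codimensional shift coming from the standard identification of $A_*^G$ with $A^*_G$ on the mixed quotients. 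I anticipate no genuine obstacle beyond the bookkeeping of this uniform approximation: since only finitely many codimension conditions $\codim_V(V-E) > \cdot$ arise in any one application, a sufficiently large $(V,E)$ always exists, and the argument descends from the non-equivariant statements already proved.
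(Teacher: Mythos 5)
Your proposal is correct and follows essentially the same route as the paper: choose a single Edidin--Graham approximation $E\subseteq V$ of sufficiently high codimension so that every group in the long exact sequence diagram is computed on the mixed quotients, observe that the $G$-invariant open/closed decomposition of $X$ descends to an open/closed decomposition of $X\times^G E$, and apply the non-equivariant Lemmas \ref{3space} and \ref{filt} there. The paper's proof is exactly this one-sentence reduction, so there is nothing to add.
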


	\begin{proof}[\textit{Proof of Theorem \ref{cyc}}]
		We prove Theorem \ref{cyc} in two steps:
		\begin{enumerate}
			\item Prove that the odd-dimensional equivariant cohomology of the $\theta$-semi-stable locus vanishes by reducing the arbitrary case to a situation where stability and semi-stability agree. There, the statement is known thanks to \cite{Reineke:03}.
			\item Prove that the equivariant cycle map $A_k^{G_d}(R_d^{\theta-\sst}) \to H_{2k}^{G_d}(R_d^{\theta-\sst})$ is an isomorphism by induction over the Harder--Narasimhan strata.
		\end{enumerate}
		
		\textbf{Step 1:} First, assume that $d$ is a $\theta$-coprime dimension vector. This means that there is no sub-dimension vector $0 \neq d' \leq d$ with the same slope as $d$ apart from $d$ itself. In this case, $\theta$-semi-stability and $\theta$-stability on $R_d$ agree and there exists a smooth geometric $PG_d$-quotient $R_d^{\theta-\mathrm{(s)st}} \to M_d^{\theta}$. Here, $PG_d = G_d/\C^\times$. In \cite[Thm.\ 6.7]{Reineke:03} it is shown that the odd-dimensional cohomology of $M_d^{\theta}$ vanishes. But as by the existence of a geometric quotient
		$$
			H_{G_d}^*(R_d^{\theta-\sst}) \cong H^*(M_d^{\theta}) \otimes H_{\C^\times}^*(\pt),
		$$
		it follows that $R_d^{\theta-\sst}$ has no odd-dimensional cohomology.
		
		Now, let $d$ be arbitrary. We show that for a fixed --- not necessarily positive --- integer $k$, there exist an acyclic quiver $\smash{\hat{Q}}$, a stability condition $\smash{\hat{\theta}}$, a $\smash{\hat{\theta}}$-coprime dimension vector $\smash{\hat{d}}$, and an integer $s \geq 0$ (all depending on $k$) for which
		\begin{align}
			H_{k}^{G_d}(R_d^{\theta-\sst}) &\cong H_{k+2s}^{PG_{\hat{d}}}(R_{\hat{d}}^{\hat{\theta}-\mathrm{(s)st}}(\hat{Q})).
		\end{align}
		For a dimension vector $n$ of $Q$, we consider $\smash{\hat{R}}_{d,n} = R_d \times F_n$ where $F_n = \bigoplus_i \Hom(\C^{n_i},\C^{d_i})$. The space $\smash{\hat{R}}_{d,n}$ is the space of representations of the framed quiver $\smash{\hat{Q}}$ of dimension vector $\smash{\hat{d}}$ which arise as follows (cf.\ \cite[Def.\ 3.1]{ER:09}): we add an extra vertex $\infty$ to the vertexes of $Q$, i.e.\ $\smash{\hat{Q}}_0 = Q_0 \sqcup \{\infty\}$ and, in addition to the arrows of $Q$, we have $n_i$ arrows from $\infty$ heading to $i$ for all $i \in Q_0$. The dimension vector $\smash{\hat{d}}$ is defined by $\smash{\hat{d}}_i = d_i$ for $i \in Q_0$ and $\smash{\hat{d}}_\infty = 1$ and is indivisible. The structure group $\smash{G_{\hat{d}}}$ is $\C^\times \times G_d$, whence we can identify $\smash{PG_{\hat{d}}}$ with $G_d$. We define $\smash{\hat{\theta}}$ in the same way as in \cite[Def.\ 3.1]{ER:09}. The following are equivalent for a framed representation $(M,f) \in \smash{\hat{R}_{d,n}}$ (cf.\ \cite[Prop.\ 3.3]{ER:09}):
		\begin{itemize}
			\item $(M,f)$ is $\smash{\hat{\theta}}$-semi-stable,
			\item $(M,f)$ is $\smash{\hat{\theta}}$-stable,
			\item $M$ is $\theta$-semi-stable and the ($\theta$-)slope of every proper subrepresentation $M'$ of $M$ which contains the image of $f$ is strictly less than the slope of $M$.
		\end{itemize}
		We denote the set of $\smash{\hat{\theta}}$-(semi-)stable points of $\smash{\hat{R}_{d,n}}$ with $\smash{\hat{R}_{d,n}^\theta}$. It is, by the above characterization, an open subset of $\smash{R_d^{\theta-\sst}} \times F_n$. Let $\smash{\hat{R}_{d,n}^x}$ denote the complement of $\smash{\hat{R}_{d,n}^\theta}$ inside $\smash{R_d^{\theta-\sst}} \times F_n$. As $\smash{R_d^{\theta-\sst}} \times F_n$ is a $G_d$-equivariant vector bundle over $R_d^{\theta-\sst}$, we obtain
		$$
			H_k^{G_d}(R_d^{\theta-\sst}) \cong H_{k+2d\cdot n}^{G_d}(R_d^{\theta-\sst} \times F_n)
		$$
		where $d\cdot n := \sum_i d_i n_i = \dim_\C F_n$. We thus obtain a long exact sequence
		\begin{align*}
			\ldots &\to H_{k+2d\cdot n}^{G_d}(\hat{R}_{d,n}^x) \to H_{k}^{G_d}(R_d^{\theta-\sst}) \to H_{k+2d\cdot n}^{G_d}(\hat{R}_{d,n}^\theta) \to H_{k-1+2d\cdot n}^{G_d}(\hat{R}_{d,n}^x) \to \ldots
		\end{align*}
		in equivariant Borel--Moore homology. The equivariant BM homology groups $H_l^{G_d}(\hat{R}_{d,n}^x)$ vanish if $l$ exceeds $2 \dim \smash{\hat{R}}_{d,n}^x$. So in order to show that (1) is an isomorphism, it suffices to find a framing datum $n$ such that the (complex) dimension of $\smash{\hat{R}_{d,n}^x}$ is smaller than $(k-1)/2+d\cdot n$.
		As shown in the proof of \cite[Thm.\ 3.2]{Franzen:15:CoHa_Modules}, $\smash{\hat{R}_{d,n}^x}$ is the union of Harder--Narasimhan strata
		$$
			\hat{R}_{d,n}^x = \bigsqcup \hat{R}_{(\hat{p},q),n}^{\HN}
		$$
		over all proper sub-dimension vectors $p$ of $d$ which have the same slope (and $q = d - p$).
		The set $\smash{\hat{R}_{(\hat{p},q),n}^{\HN}}$ is defined as follows: let $L(M,f)$ be minimal among those representations of the same slope as $M$ which contain $\im f$. We set $\smash{\hat{R}_{(\hat{p},q),n}^{\HN}}$ as the set of all $(M,f) \in R_d^{\theta-\sst} \times F_n$ with $\dimvect L(M,f) = p$. As
		$$
			\hat{R}_{(\hat{p},q),n}^{\HN} \cong \Big( \big( \begin{smallmatrix} R_p^{\sst} & * \\ & R^{\sst}_q \end{smallmatrix} \big) \times \big( \begin{smallmatrix} F_p \\ 0 \end{smallmatrix} \big) \Big) \times^{P_{p,q}} G_d,
		$$
		the dimension of this stratum --- if non-empty --- equals
		\begin{align*}
			\sum_{\alpha: i \to j} (d_id_j - p_iq_j) + \sum_i p_in_i - \sum_i (d_i^2 - p_iq_i) + \sum_i d_i^2 
			&= \dim (R_d) + d \cdot n + \chi(p,q) - q \cdot n.
		\end{align*}
		Choosing $n$ large enough such that
		$$
			q \cdot n > \dim(R_d) - \frac{k-1}{2} + \chi(d-q,q)
		$$
		for all sub-dimension vectors $0 \neq q \leq d$ of the same slope as $d$ (which is possible as these are finitely many non-zero dimension vectors $q$), we find that the dimension of $\smash{\hat{R}_{d,n}^x}$ is smaller than $(k-1)/2 + d \cdot n$, as desired.
		
		Similar arguments were also used by Davison--Meinhardt in \cite[Le. 4.1]{DM:16}.

		\textbf{Step 2:} Let $Q$, $\theta$ and $d$ be arbitrary. We consider the open/closed complementary subsets $R_d^{\sst}$ and $R_d^{\unst}$. As $R_d^{\sst}$ is smooth (of dimension $n = \sum_{\alpha:i \to j} d_id_j$), we have $\smash{A_i^{G_d}(R_d^{\sst})} \cong \smash{A_{G_d}^{n-i}(R_d^{\sst})}$ and $\smash{H_j^{G_d}(R_d^{\sst})} \cong \smash{H_{G_d}^{2n-j}(R_d^{\sst})}$. By Corollary \ref{cor43} (concretely, the equivariant analog of part (3) of Lemma \ref{3space}), it suffices to show that 
		$$
			\cl: A_*^{G_d}(R_d^{\unst}) \to H_{2*}^{G_d}(R_d^{\unst})
		$$ 
		is an isomorphism. If $R_d^{\unst} = \emptyset$, then the assertion is clear. So let us assume that $R_d^{\unst}$ is non-empty. The unstable locus admits a stratification into locally closed (irreducible) subsets $R_{d^*}^{\HN}$, the Harder--Narasimhan strata, by \cite[Prop.\ 3.4]{Reineke:03}. By \cite[Prop.\ 3.7]{Reineke:03}, they can be ordered in such a way that the union of the first $n$ strata is closed for all $n$, thus yielding a filtration by $G_d$-invariant closed subsets like in Lemma \ref{filt}. Thus it suffices to prove that
		$$
			A_*^{G_d}(R_{d^*}^{\HN}) \to H_*^{G_d}(R_{d^*}^{\HN})
		$$
		is an isomorphism for all HN types $d^* = (d^1,\ldots,d^l)$ of $d$; this includes showing that all HN strata have even cohomology. But by the proof of \cite[Prop.\ 3.4]{Reineke:03}, we have 
		$$
			R_{d^*}^{\HN} \cong Z_{d^*}\times^{P_{d^*}} G_d,
		$$
		where $Z_{d^*}$ is a (trivial) vector bundle over $R_{d^1}^{\sst} \times \ldots \times R_{d^l}^{\sst}$, and $P_{d^*}$ is a parabolic subgroup of $G_d$ with Levi $G_{d^1}\times \ldots \times G_{d^l}$.
		
		In particular, $R_{d^*}^{\HN}$ is smooth, therefore we can again identify Borel--Moore homology with cohomology. Moreover, $\smash{A_{G_d}^i(R_{d^*}^{\HN}) }\cong \smash{A_{G_{d^1} \times \ldots \times G_{d^l}}^i(R_{d^1}^{\sst} \times \ldots \times R_{d^l}^{\sst})}$, and similarly for equivariant cohomology. This shows in particular that the equivariant odd-dimensional cohomology of $R_{d^*}^{\HN}$ vanishes by the first step of the proof. Now we argue by induction on the dimension vector $d$, where the set of dimension vectors is partially ordered by $d \leq e$ if $d_i \leq e_i$ for all $i$; with respect to this order, all $d^\nu$'s are strictly smaller than $d$. We thus assume that the equivariant cycle map for each $R_{d^\nu}^{\sst}$ is an isomorphism. Then, \cite[Le.\ 6.2]{Totaro:99}, which can be generalized to equivariant Chow groups, implies that the equivariant exterior product map
		$$
			A_{G_{d^1}}^*(R_{d^1}^{\sst}) \otimes \ldots \otimes A_{G_{d^l}}^*(R_{d^l}^{\sst}) \to A_{G_{d^1} \times \ldots \times G_{d^l}}^*(R_{d^1}^{\sst} \times \ldots \times R_{d^l}^{\sst})
		$$
		is an isomorphism (even with integral coefficients). As the $R_{d^\nu}^{\sst}$'s have even cohomology, the K\"unneth map is an isomorphism. We are thus reduced to proving the assertion for minimal dimension vectors $d$, i.e.\ $d= 0$. But there the statement is obviously true.
	\end{proof}

	\begin{rem}
		Theorem \ref{cyc} is valid for integer coefficients.
	\end{rem}

	\begin{cor}\label{c:cyclemap}
		The cycle map induces an isomorphism $\AA^{\sst,\mu} \to \HH^{\sst,\mu}$ of algebras.
	\end{cor}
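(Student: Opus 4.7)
The plan is to combine Theorem \ref{cyc} with the naturality of the (equivariant) cycle map under the operations that define the convolution product. Theorem \ref{cyc} already supplies, for every $d \in \Gamma^{\theta,\mu}$, an isomorphism $\AA_d^{\sst} \to \HH_d^{\sst}$ (up to the usual doubling of degrees). Summing over $d$ yields a graded $\Q$-linear isomorphism $\AA^{\sst,\mu} \to \HH^{\sst,\mu}$; the only remaining content of the corollary is that this map intertwines the convolution products on the two sides.

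To verify this, I would unwind the construction of the ChowHa multiplication on page \pageref{coha_mult}: it is the composition of (i) the equivariant exterior product $A_{G_d}^* \otimes A_{G_e}^* \to A_L^*(R_d^{\sst} \times R_e^{\sst})$, (ii) the flat pullback along the $L$-equivariant affine bundle $Z_{d,e}^{\sst} \to R_d^{\sst} \times R_e^{\sst}$, (iii) the change-of-group isomorphism $A_L^*(Z_{d,e}^{\sst}) \cong A_P^*(Z_{d,e}^{\sst})$ coming from the affine fibration $P/L$, (iv) the Gysin pushforward along the regular closed embedding $Z_{d,e}^{\sst} \hookrightarrow R_{d+e}^{\sst}$ as the zero section of a $P$-equivariant vector bundle, and (v) the proper pushforward along the $G/P$-bundle $R_{d+e}^{\sst} \times^P G \to R_{d+e}^{\sst}$. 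By construction, the CoHa multiplication on $\HH^{\sst,\mu}$ is defined by exactly the same string of maps with equivariant singular cohomology in place of equivariant Chow groups.

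The key point is then that the equivariant cycle map is natural with respect to each of these operations: exterior product, flat (in particular smooth) pullback, proper pushforward, and Gysin maps along regular closed embeddings all commute with $\mathrm{cl}$, while the change-of-group isomorphism on both sides is induced by the same pullback along the $P/L$-fibration. This is the standard compatibility between intersection theory and Borel--Moore homology from \cite[Ch.~19]{Fulton:98}, carried over to the equivariant setting via the approximation-by-algebraic-spaces construction of \cite{EG:98}. Applying naturality step by step shows that the cycle map intertwines the two convolutions, so the isomorphism from Theorem \ref{cyc} upgrades to a homomorphism of graded algebras.

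No substantive obstacle is expected: Theorem \ref{cyc} does the hard work of establishing bijectivity, and the algebra homomorphism property reduces to a routine check that the standard operations used to build the convolution all commute with the equivariant cycle map. The only point that deserves a brief mention is the one of strict degree/shift bookkeeping to match the grading shift $s_1 - s_0 = -\chi(d,e)$ on both sides, which is automatic since the cycle map doubles degrees uniformly.
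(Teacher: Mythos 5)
Your proposal is correct and follows essentially the same route as the paper: bijectivity comes from Theorem \ref{cyc}, and the algebra homomorphism property follows because both multiplications are built from the same Hecke correspondences and the equivariant cycle map commutes with pull-back, push-forward, and the exterior product. Your version simply spells out the individual naturality checks that the paper's two-sentence proof leaves implicit.
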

	
	\begin{proof} 
		The multiplication both in the semi-stable ChowHa and in the semi-stable CoHa $\HH^{\sst,\mu}$ are constructed by means of the same Hecke correspondences. Moreover, the cycle map is compatible with push-forward and pull-back.
	\end{proof}
	
	\section{Tensor Product Decomposition}\label{s:tensorprod}
	
	We apply Corollary \ref{cor43} to the Harder--Narasimhan filtration, like in the proof of Theorem \ref{cyc}. We obtain that $\smash{A_*^{G_d}}(R_d) \cong \bigoplus_{d^*} \smash{A_*^{G_d}}(R_{d^*}^{\HN})$, where the sum ranges over all Harder--Narasimhan types which sum to $d$. Under the above isomorphism, the natural surjection $\smash{A_*^{G_d}}(R_d) \to \smash{A_*^{G_d}}(R_d^{\sst})$ is the projection to the summand $\smash{A_*^{G_d}}(\smash{R_{(d)}^{\HN}}) = \smash{A_*^{G_d}}(R_d^{\sst})$ and the push-forward of the closed embedding of the unstable locus corresponds to the embedding of the direct summand $\smash{\bigoplus_{d^* \neq (d)}} \smash{A_*^{G_d}}(\smash{R_{d^*}^{\HN}})$. In particular, both maps are split epi-/monomorphisms.
	Using the cohomological grading of the Chow groups, we get
	$$
		\smash{A_{G_d}^*}(R_d) \cong \bigoplus_{d^*} \smash{A_{G_d}^{*-\codim_{R_d}(R_{d^*}^{\HN})}}(R_{d^*}^{\HN}).
	$$
	The codimension of $R_{d^*}^{\HN}$ in $R_d$ can easily be computed as $\chi(d^*) := \sum_{r < s} \chi(d^r,d^s)$. With the arguments from the proof of Theorem \ref{cyc}, we obtain an isomorphism 
	$$
		A_{G_d}^*(R_{d^*}^{\HN}) \cong A_{G_{d^1}}^*(R_{d^1}^{\sst}) \otimes \ldots \otimes A_{G_{d^l}}^*(R_{d^l}^{\sst}).
	$$
	From the sections of the pull-backs of the open embeddings of the individual semi-stable loci, we obtain a section of the natural surjection
	$$
		A_{G_{d^1}}^*(R_{d^1}) \otimes \ldots \otimes A_{G_{d^l}}^*(R_{d^l}) \onto A_{G_{d^1}}^*(R_{d^1}^{\sst}) \otimes \ldots \otimes A_{G_{d^l}}^*(R_{d^l}^{\sst})
	$$
	and this section gives rise to a commutative diagram
	\begin{center}
		\begin{tikzpicture}[description/.style={fill=white,inner sep=2pt}]
			\matrix(m)[matrix of math nodes, row sep=1.5em, column sep=3em, text height=1.5ex, text depth=0.25ex]
			{
				A_{G_{d^1}}^*(R_{d^1}^{\sst}) \otimes \ldots \otimes A_{G_{d^l}}^*(R_{d^l}^{\sst}) & A_{G_d}^*(R_d^{\HN}) \\
				A_{G_{d^1}}^*(R_{d^1}) \otimes \ldots \otimes A_{G_{d^l}}^*(R_{d^l}) & A_{G_d}^{*+\chi(d^*)}(R_d) \\
			};
			\path[->, font=\scriptsize]
			(m-1-1) edge node[auto] {$\cong$} 	(m-1-2)
			(m-1-1) edge 	(m-2-1)
			(m-1-2) edge 	(m-2-2)
			(m-2-1) edge 	(m-2-2);
		\end{tikzpicture}
	\end{center}
	in which the lower horizontal map is the ChowHa multiplication, the left vertical map is the section, and the right vertical map is the inclusion as a direct summand. We define the descending tensor product $\bigotimes^{\ot}_{\mu \in \Q} \AA^{\sst,\mu}$ as the $\Gamma$-graded vector space
	$$
		\bigoplus_d \bigoplus_{d^*} A_{G_{d^1}}^*(R_{d^1}^{\sst}) \otimes \ldots \otimes A_{G_{d^l}}^*(R_{d^l}^{\sst})
	$$
	where the inner sum ranges over all Harder--Narasimhan types $d^*$ summing to $d$ (i.e.\ tuples $d^* = (d^1,\ldots,d^l)$ of dimension vectors of slopes $\mu^1 > \ldots > \mu^l$ such that $d^1 + \ldots +d^l = d$). The above considerations then prove
	
	\begin{thm}\label{t:tensor}
		The ChowHa-multiplication induces an isomorphism
		$$
			\bigotimes^{\ot}_{\mu \in \Q} \AA^{\theta-\sst,\mu} \xto{}{\cong} \AA
		$$
		of $\Gamma$-graded vector spaces between the descending tensor product of the $\theta$-semi-stable ChowHa's over all possible slopes and the ChowHa.
	\end{thm}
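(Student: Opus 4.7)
The plan is to assemble the statement directly from the pieces already built up in the paragraphs preceding the theorem. The key ingredients are all in place: a decomposition of $A_{G_d}^*(R_d)$ over Harder--Narasimhan types coming from Corollary \ref{cor43}, a Künneth-type identification of each $A_{G_d}^*(R_{d^*}^{\HN})$ with a tensor product of semi-stable Chow groups, and a commutative square that identifies the ChowHa multiplication with inclusion as a direct summand. The proof is largely a bookkeeping assembly of these.

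First, I would fix a dimension vector $d$ and record the isomorphism
$$A_{G_d}^*(R_d) \;\cong\; \bigoplus_{d^*} A_{G_d}^{*-\chi(d^*)}(R_{d^*}^{\HN}) \;\cong\; \bigoplus_{d^*} A_{G_{d^1}}^{*_1}(R_{d^1}^{\theta-\sst}) \otimes \cdots \otimes A_{G_{d^l}}^{*_l}(R_{d^l}^{\theta-\sst}),$$
where $d^*=(d^1,\ldots,d^l)$ runs over HN types summing to $d$, the first step is Corollary \ref{cor43} applied to the HN filtration, and the second step uses $R_{d^*}^{\HN} \cong Z_{d^*} \times^{P_{d^*}} G_d$ together with Totaro's equivariant Künneth result (as invoked in the proof of Theorem \ref{cyc}, relying on the vanishing of odd cohomology of $R_{d^\nu}^{\theta-\sst}$ established there). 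Summing over $d\in\Gamma$ and comparing with the definition of the descending tensor product immediately gives an isomorphism of $\Gamma$-graded vector spaces ${\bigotimes^\ot}_{\mu\in\Q} \AA^{\theta-\sst,\mu} \cong \AA$.

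Next, I would show that this vector space isomorphism is realized by the iterated ChowHa multiplication. For the length-two case, this is precisely the content of the commutative square displayed before the theorem: the section of the pull-back composed with the ChowHa multiplication factors through the inclusion of the $(d^1,d^2)$-summand of $\bigoplus_{d^*} A_{G_d}^{*-\chi(d^*)}(R_{d^*}^{\HN})$. For a general HN type $d^*=(d^1,\ldots,d^l)$ with strictly decreasing slopes, I would iterate the Hecke correspondence with respect to the standard parabolic $P_{d^*}$ and argue by associativity of the ChowHa product together with the nested parabolic structure: the $l$-fold Hecke correspondence that computes the iterated multiplication is built from precisely the same parabolic induction $Z_{d^*} \times^{P_{d^*}} G_d$ that describes the HN stratum $R_{d^*}^{\HN}$.

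The main obstacle is this last compatibility check for $l\geq 3$: namely, that iterating the two-step Hecke correspondence over a chain of parabolics $P_{(d^1,d^2+\cdots+d^l)}, P_{(d^2,d^3+\cdots+d^l)},\ldots$ recovers the single Hecke correspondence for $P_{d^*}$ up to the expected degree shift $\chi(d^*) = \sum_{r<s}\chi(d^r,d^s)$. This is a standard parabolic induction argument, but it has to be carried out carefully since both the projection to the Levi and the push-forward along $G/P$ must be shown to compose correctly; the degree shifts add up to the total $\chi(d^*)$, matching the codimension of $R_{d^*}^{\HN}$ in $R_d$. Once this is settled the theorem follows, since by construction the composition of the section and the summand inclusion in the commutative square is the identity map on each tensor factor $\bigotimes_i A_{G_{d^i}}^*(R_{d^i}^{\theta-\sst})$.
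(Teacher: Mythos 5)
Your proposal follows essentially the same route as the paper: the Harder--Narasimhan decomposition of $A_{G_d}^*(R_d)$ via Corollary \ref{cor43}, the K\"unneth identification of each stratum's equivariant Chow group with a tensor product of semi-stable Chow groups, and the commutative square showing that the (iterated) ChowHa multiplication precomposed with the sections equals the inclusion of the corresponding direct summand. The only difference is that you flag the $l\geq 3$ parabolic-induction compatibility as a point needing explicit verification, whereas the paper states the commutative diagram directly for a general HN type and leaves that check implicit.
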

	
	\begin{rem}
		The theorem is valid with integral coefficients, for an arbitrary quiver, and does not require the stability condition to be generic. Theorem \ref{t:tensor} has been proved with different methods by Rim{\'a}nyi in \cite{Rimanyi:13} for the CoHa of a Dynkin quiver which is not an orientation of $E_8$.
	\end{rem}

	\section{Structure of the CoHa of a Symmetric Quiver}\label{s:structurecoha}
	
	The CoHa/ChowHa of a quiver is described explicitly in \cite{KS:11}. Since we will make use of this description, we recall it here: The equivariant Chow ring $A_{G_d}^*(R_d) \cong A_{G_d}^*(\pt)$ is isomorphic to
	$$
		\Q[x_{i,r} \mid i \in Q_0,\ 1 \leq r \leq d_i]^{W_d},
	$$
	where $W_d = \prod_i S_{d_i}$ is the Weyl group of a maximal torus of $G_d$. We may regard the variables $x_{i,r}$ (located in degree $1$) as a basis for the character group of this torus or as the Chern roots of the $G_d$-linear vector bundle $R_d \times k^{d_i} \to R_d$ with $G_d$ acting on $k^{d_i}$ by its $i$\textsuperscript{th} factor.
	
	\begin{thm}[{\cite[Thm.\ 2]{KS:11}}]
		For $f \in \AA_d$ and $g \in \AA_e$, the product $f * g$ equals the function
		$$
			\sum f(x_{i,\sigma_i(r)} \mid i,\ 1 \leq r \leq d_i) \cdot g(x_{i,\sigma_i(d_i+s)} \mid i,\ 1 \leq s \leq e_i) \cdot \prod_{i,j \in Q_0} \prod_{r=1}^{d_i} \prod_{s=1}^{e_j} (x_{j,\sigma_j(d_j+s)} - x_{i,\sigma_i(r)})^{a_{i,j} - \delta_{i,j}}.
		$$
		The sum ranges over all $(d,e)$-shuffles $\sigma = (\sigma_i \mid i) \in W_{d+e}$, that means each $\sigma_i$ is a $(d_i,e_i)$-shuffle permutation.
	\end{thm}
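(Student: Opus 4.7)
The plan is to trace each of the four arrows in the definition of the ChowHa product (vector-bundle pullback, restriction from $A_P^*$ to $A_L^*$, Gysin push-forward along $Z_{d,e}\hookrightarrow R_{d+e}$, and proper push-forward along the $G/P$-fibration) through a chain of polynomial-ring identifications. Since $R_d$, $R_e$, $Z_{d,e}$, and $R_{d+e}$ are all affine spaces, their equivariant Chow rings coincide with those of a point; by \cite{EG:98} and the splitting principle, $A_{G_d}^*(R_d)_\Q = \Q[x_{i,r} : 1 \leq r \leq d_i]^{W_d}$, and similarly $A_L^* = A_P^* = \Q[x_{i,r} : 1 \leq r \leq d_i+e_i]^{W_d \times W_e}$ (the first $d_i$ variables of index $i$ coming from the $G_d$-factor and the remaining $e_i$ from the $G_e$-factor), while $A_G^* = \Q[x]^{W_{d+e}}$.

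Under these identifications, the first two arrows are transparent. The pullback along the $L$-equivariant vector bundle $Z_{d,e}\to R_d\times R_e$ is the identity on polynomial rings, so the external product $f\otimes g$ is represented by $f(x_{i,r}:r\leq d_i)\,g(x_{i,d_i+s}:s\leq e_i)$; and the restriction $A_P^*\to A_L^*$ is an isomorphism because $P/L$ is affine, and is again the identity on presentations. The third arrow is the Gysin push-forward along the $P$-equivariant closed embedding $Z_{d,e}\hookrightarrow R_{d+e}$. Its normal bundle is the lower-triangular block $\bigoplus_{\alpha:i\to j}\Hom(k^{d_i},k^{e_j})$ with $L$-weights $x_{j,d_j+s}-x_{i,r}$, so its Euler class is
$$e_{d,e}=\prod_{i,j\in Q_0}\prod_{r=1}^{d_i}\prod_{s=1}^{e_j}(x_{j,d_j+s}-x_{i,r})^{a_{i,j}}.$$
Because $R_{d+e}$ is equivariantly contractible, the push-forward $A_P^*(Z_{d,e})\to A_P^{*+s_1}(R_{d+e})$ is, under the polynomial identification, simply multiplication by $e_{d,e}$.

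The fourth arrow is the proper push-forward along the $G/P$-fibration $R_{d+e}\times^P G\to R_{d+e}$. Under the polynomial identification it is the classical partial-antisymmetrization operator $\Q[x]^{W_L}\to\Q[x]^{W_{d+e}}$,
$$h\mapsto \sum_{\sigma}\sigma\!\left(\frac{h}{\prod_i\prod_{1\leq r\leq d_i<s\leq d_i+e_i}(x_{i,s}-x_{i,r})}\right),$$
summed over $(d,e)$-shuffles $\sigma\in W_{d+e}/W_L$. Composing all four arrows, the Weyl denominator from this step cancels the $i=j$ factors of $e_{d,e}$, dropping the exponent from $a_{i,j}$ to $a_{i,j}-\delta_{i,j}$, and the shuffle sum produces precisely the formula in the statement. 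The main obstacle is this last step --- identifying push-forward along a partial flag bundle with the Weyl partial antisymmetrizer. This is classical in type $A$ and can be obtained either by iterating the projective-bundle formula through a tower of Grassmannian bundles, or by equivariant localization for the maximal torus of $G$ on $G/P$, where the shuffle sum arises as the sum of fixed-point contributions.
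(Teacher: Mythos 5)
Your derivation is correct: the identification of the four maps with polynomial operations, the Euler class of the normal bundle $\bigoplus_{\alpha:i\to j}\Hom(k^{d_i},k^{e_j})$ with weights $x_{j,d_j+s}-x_{i,r}$, and the cancellation of the $i=j$ factors of that Euler class against the Weyl denominator of the $G/P$-push-forward (dropping the exponent from $a_{i,j}$ to $a_{i,j}-\delta_{i,j}$) all check out, and the one step you flag as nontrivial (push-forward along a product of Grassmannian bundles equals the shuffle antisymmetrizer) is indeed classical and available in equivariant Chow groups. The paper gives no proof of this statement --- it quotes it from Kontsevich--Soibelman --- and your argument is essentially their original one, transposed from equivariant cohomology to equivariant Chow groups, where every step still applies because all the spaces involved are affine.
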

	
	We assume that the stability condition $\theta$ is $\mu$-generic. In this case, we can equip the semi-stable ChowHa of slope $\mu$ with a refined grading: setting
	$$
		\AA_{(d,n)}^{\sst} = \begin{cases} A_{G_d}^{\frac{1}{2}(n-\chi(d,d))}(R_d^{\sst}), & n \equiv \chi(d,d) \text{ (mod 2)} \\ 0, & n \not\equiv \chi(d,d) \text{ (mod 2),} \end{cases}
	$$
	it is easy to see that the multiplication map becomes bigraded, thus $\AA_{(d,n)}^{\sst} \otimes \AA_{(e,m)}^{\sst} \to \AA_{(d+e,n+m)}^{\sst}$.
	
	Like in Section \ref{DT}, we consider again the case of a symmetric quiver and the trivial stability condition. In this situation, it is immediate from the formula in the above theorem that $f * g = (-1)^{\chi(d,e)} g * f$ for $f \in \AA_d$ and $g \in \AA_e$. One can show (cf.\ \cite[Sect.\ 2.6]{KS:11}) that there exists a bilinear form $\psi$ on the $\Z/2\Z$-vector space $(\Z/2\Z)^{Q_0}$ such that $f \star g = (-1)^{\psi(d,e)} f*g$ is a super-commutative multiplication, when defining the parity of an element of bidegree $(d,n)$ to be the parity of $n$.
	We see that the generating series $P(q,t) = \sum_d \sum_k (-1)^k \dim \AA_{(d,k)} q^{k/2} t^d$ is
	$$
		\sum_d (-q^{1/2})^{\chi(d,d)} \prod_i \prod_{\nu=1}^{d_i}(1-q^\nu)^{-1}t^d.
	$$
	So, $P(q,t) = A(q^{-1},t)$. By Theorem \ref{dt_thm}, the generating series has a product expansion
	$$
		P(q,t) = \prod_d \prod_k \prod_{n \geq 0} (1-q^{n+k/2}t^d)^{(-1)^{k-1}\Omega_{d,k}}.
	$$
	As a free super-commutative algebra with a generator in bidegree $(d,k)$ has the generating series $\smash{(1-q^{k/2}t^d)^{(-1)^{k-1}}} = \Exp((-1)^kq^{k/2}t^d)$, Kontsevich--Soibelman made a conjecture in \cite{KS:11} which was eventually proved by Efimov.
	
	\begin{thm}[{\cite[Thm.\ 1.1]{Efimov:12}}] \label{Efimov}
		For a symmetric quiver $Q$, the algebra $\AA(Q)$, equipped with the super-commutative multiplication $\star$, is isomorphic to a free super-commutative algebra over a $(\Gamma \times \Z)$-graded vector space $V = V^{\prim} \otimes \Q[z]$, where $z$ lives in bidegree $(0,2)$, and $\bigoplus_k V_{d,k}^{\prim}$ is finite-dimensional for every $d$.
	\end{thm}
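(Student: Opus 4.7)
The plan is to combine the Kontsevich--Soibelman shuffle description of $\AA(Q)$ with a super-Hopf-algebra structure theorem, reducing the claim to a finite-dimensionality statement for primitive generators. The shuffle formula together with the sign twist by $\psi$ already shows that $(\AA,\star)$ is associative and super-commutative, so the first task is to construct a compatible coproduct. For each splitting $d = p+q$, pulling back along the closed embedding $\uppertripq \hookrightarrow R_{p+q}$ and inverting the Levi isomorphism $A_P^*(\uppertripq) \cong A_L^*(R_p \times R_q)$ produces a restriction map $\AA_{p+q} \to \AA_p \otimes \AA_q$. Summing these with an appropriate sign twist yields a coproduct $\Delta \colon \AA \to \AA \otimes \AA$; a compatibility calculation with the Hecke correspondences, in the spirit of Green's theorem for classical Hall algebras, should show that $(\AA, \star, \Delta)$ is a connected $\Gamma$-graded, super-commutative, super-cocommutative Hopf algebra over $\Q$.

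Next, apply the super-analog of the Milnor--Moore / Cartier--Gabriel--Kostant theorem (valid in characteristic zero) to conclude $\AA \cong \mathrm{Sym}(P)$ as super-algebras, where $P$ is the $(\Gamma\times\Z)$-graded sub-super-vector space of primitive elements. Produce a central primitive class $z$ of bidegree $(0,2)$, naturally arising from a first Chern class attached to a central one-parameter subgroup of $G_d$ that acts compatibly across all dimension vectors. Verify that multiplication by $z$ is injective on $P$ and set $V^{\prim} := P/zP$. Then $P \cong V^{\prim} \otimes \Q[z]$ as graded $\Q[z]$-modules, and hence $\AA \cong \mathrm{Sym}(V^{\prim} \otimes \Q[z])$ as required.

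The final and hardest task is to show that $\bigoplus_k V^{\prim}_{d,k}$ is finite-dimensional for every $d$. By Theorem~\ref{dt_thm}, this is equivalent to the assertion that $\tilde{\Omega}_d(q) \in \Z[q]$ is an honest polynomial rather than merely a formal power series --- the integrality content of the Kontsevich--Soibelman conjecture for symmetric quivers. This does not follow formally from the Milnor--Moore step and is the main obstacle. I would attack it by filtering $\AA_d$ by the support width of shuffle expressions, analyzing the associated graded algebra, and using a Poincar\'e-duality-type bound on $A_{G_d}^*(R_d^{\theta-\st})$ to cap the cohomological degree of any primitive generator in bidegree $(d,\cdot)$ in terms of $\dim R_d - \dim G_d$ and $\chi(d,d)$. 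Combined with the positivity of Chow degrees, this two-sided bound forces the finite-dimensionality. This is precisely the place where the symmetric-quiver hypothesis must enter essentially, and where Efimov's original argument required a genuinely new combinatorial input beyond the formal Hopf-algebraic setup.
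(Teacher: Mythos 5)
The paper does not prove this statement at all: it is quoted verbatim as Efimov's theorem (\cite[Thm.\ 1.1]{Efimov:12}) and used as a black box, so there is no internal proof to compare against. Judged on its own terms, your proposal has a genuine gap at its very first step, and the step you yourself flag as hardest is the one Efimov's actual argument is entirely about.

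The gap: the coproduct you propose does not satisfy the bialgebra axiom. Restricting along $Z_{p,q}\hookrightarrow R_{p+q}$ and inverting the Levi isomorphism just realizes $\AA_{p+q}\to\AA_p\otimes\AA_q$ as the inclusion of invariants $\Q[x_{i,r}]^{W_{p+q}}\hookrightarrow\Q[x_{i,r}]^{W_p\times W_q}$, and one checks directly against the shuffle formula of \cite[Thm.\ 2]{KS:11} that $\Delta(f*g)\neq\Delta(f)*\Delta(g)$: the kernel $\prod(x_{j,\sigma_j(d_j+s)}-x_{i,\sigma_i(r)})^{a_{i,j}-\delta_{i,j}}$ produces cross terms mixing variables from the two tensor factors that cannot be absorbed without inverting differences of variables. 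A coproduct compatible with the CoHa product exists only after localization (Davison's localized coproduct), and the super-Milnor--Moore theorem is not available for such a localized bialgebra. This is not a repairable technicality: if $(\AA,\star)$ carried a connected, graded, super-commutative and super-cocommutative Hopf structure over $\Q$, freeness would be formal, whereas the Kontsevich--Soibelman conjecture resisted exactly because no such structure is known. Consequently the finite-dimensionality of $V^{\prim}_d$ is not the only obstacle --- the freeness itself is. Your closing paragraph on bounding degrees of primitive generators is a sketch of intentions rather than an argument; Efimov's proof instead works with an explicit filtration of the shuffle algebra and identifies a spanning set of products indexed by partitions, from which both freeness and the degree bounds on generators are extracted simultaneously. (Note also that the present paper derives a bound on the support of $\Omega_d$, namely $\chi(d,d)\le k\le 2-\chi(d,d)$, as a \emph{consequence} of Efimov's theorem in Theorem \ref{DT=ChowBetti}, so invoking such a bound to prove finite-dimensionality here would be circular within this paper's logic.)
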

	
	This result implies that the DT invariants $\Omega_{d,k}$ must agree with the dimension of $V_{(d,k)}^{\prim}$ and must therefore be non-negative. We will give another characterization of the primitive part of the CoHa in Theorem \ref{DT=ChowBetti}.
	
	
	\section{Tautological Presentation of the Semi-Stable ChowHa}\label{s:taut}
	
	We investigate the relation between the semi-stable ChowHa $\AA^{\theta-\sst,\mu}$ and the ChowHa $\AA$ of a quiver $Q$. For a dimension vector $d$ of slope $\mu$, we consider the open embedding
	$$
		R_d^{\sst} \to R_d
	$$
	which gives rise to a surjective map $A_{G_d}^*(R_d) \to A_{G_d}^*(R_d^{\sst})$. As the Hecke correspondences for the semi-stable ChowHa are given by restricting the Hecke correspondences of $\AA$ to the semi-stable loci, these open pull-backs are compatible with the multiplication, i.e.\ they induce a surjective homomorphism of $\Gamma$-graded algebras
	$$
		\AA \to \AA^{\theta-\sst,\mu}.
	$$
	Here, we regard $\AA^{\theta-\sst,\mu}$ as a $\Gamma$-graded algebra by extending it trivially to every dimension vector whose slope is not $s$. We can describe the kernel explicitly:
	
	\begin{thm} \label{taut}
		The kernel of the natural map $\AA_d \to \AA_d^{\theta-\sst}$ equals the sum
		$$
			\sum \AA_p * \AA_q
		$$
		over all pairs $(p,q)$ of dimension vectors of $Q$ which sum to $d$ and such that $\mu(p) > \mu(q)$.
	\end{thm}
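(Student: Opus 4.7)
The plan is to prove the two inclusions separately: the easy inclusion ``$\supseteq$'' comes directly from the geometric description of the ChowHa multiplication via Hecke correspondences, while the converse ``$\subseteq$'' is obtained by invoking the tensor product decomposition (Theorem \ref{t:tensor}) together with associativity.

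For ``$\supseteq$'', I would fix $(p,q)$ with $p+q = d$ and $\mu(p) > \mu(q)$, so in particular $\mu(p) > \mu(d)$. The ChowHa multiplication $\AA_p \otimes \AA_q \to \AA_d$ is constructed via pull-back and proper push-forward along the Hecke diagram $R_p \times R_q \leftarrow Z_{p,q} \to R_{p+q}$, where $Z_{p,q} \times^P G_d \to R_d$ parametrizes representations admitting a sub-representation of dimension vector $p$. Since any such representation is destabilized by that subrepresentation (its slope exceeds $\mu(d)$), the image of this proper map is contained in the closed complement $R_d^{\unst}$. Hence the image of $\AA_p \otimes \AA_q \to \AA_d$ factors through the push-forward from $A^*_{G_d}(R_d^{\unst})$, which is exactly the kernel of $\AA_d \to \AA_d^{\theta-\sst}$.

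For ``$\subseteq$'', I would apply Theorem \ref{t:tensor} (and the analysis immediately preceding it) to obtain the direct-sum decomposition
\[
\AA_d \;\cong\; \bigoplus_{d^*} A_{G_{d^1}}^*(R_{d^1}^{\sst}) \otimes \cdots \otimes A_{G_{d^l}}^*(R_{d^l}^{\sst}),
\]
indexed by Harder--Narasimhan types $d^* = (d^1,\ldots,d^l)$ with $d^1 + \cdots + d^l = d$ and $\mu(d^1) > \cdots > \mu(d^l)$. Under this decomposition the restriction map to $\AA_d^{\theta-\sst}$ is the projection onto the summand corresponding to the trivial HN type $d^* = (d)$, so the kernel is the direct sum over HN types with $l \geq 2$. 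By the commutative diagram displayed just before Theorem \ref{t:tensor}, each such summand is contained in the image of the iterated ChowHa multiplication $\AA_{d^1} * \cdots * \AA_{d^l}$. By associativity this image lies in $\AA_{d^1} * \AA_{d^2+\cdots+d^l}$, and setting $p = d^1$, $q = d^2 + \cdots + d^l$, the slope inequality $\mu(d^1) > \mu(d^j)$ for all $j \geq 2$ together with the fact that $\mu(q)$ is a positive weighted average of the $\mu(d^j)$ for $j \geq 2$ yields $\mu(p) > \mu(q)$, as required.

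The one care point, which is the mildly subtle part of the argument, is making the first inclusion precise: one must verify that the Hecke push-forward really does factor through the unstable locus, i.e.\ that the composition of operations defining the ChowHa product can be identified with the image of $A^*_{G_d}(R_d^{\unst}) \to A^*_{G_d}(R_d)$ (after the codimension-shift $-\chi(p,q)$). This is a direct consequence of the base-change compatibility of the cartesian square relating the Hecke correspondence on $R_d$ to the one on $R_d^{\sst}$, together with functoriality of proper push-forward and flat pull-back for equivariant Chow groups.
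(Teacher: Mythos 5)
Your proof is correct, but it reaches the harder inclusion by a genuinely different route than the paper. The paper obtains both inclusions simultaneously from one exact sequence: excision gives $A_*^{G_d}(R_d^{\unst}) \to A_*^{G_d}(R_d) \to A_*^{G_d}(R_d^{\sst}) \to 0$, the unstable locus is written as the union of the closed sets $R_{p,q}$ over forbidden decompositions $d=p+q$, and Lemma \ref{push} --- surjectivity of proper push-forward along surjective proper maps with \emph{rational} coefficients --- shows that the classes pushed forward from the $Z_{p,q}\times^{P_{p,q}}G_d$ already exhaust $A_*^{G_d}(R_d^{\unst})$; identifying these push-forwards with the ChowHa product then finishes the proof. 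Your ``$\supseteq$'' coincides with the easy half of this (the Hecke push-forward factors through the closed unstable locus, and your closing ``care point'' is resolved exactly as you suggest: the Gysin map of the zero section agrees with proper push-forward on smooth varieties, so the product is push-forward along the proper map $Z_{p,q}\times^{P_{p,q}}G_d \to R_d$ with closed image $R_{p,q}\subseteq R_d^{\unst}$). Your ``$\subseteq$'', however, routes through the Harder--Narasimhan direct-sum decomposition underlying Theorem \ref{t:tensor}, plus associativity and the weighted-average slope estimate $\mu(d^2+\cdots+d^l)<\mu(d^1)$; this is sound and non-circular, since Theorem \ref{t:tensor} is established earlier and independently. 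The trade-off is that the decomposition preceding Theorem \ref{t:tensor} is proved via Corollary \ref{cor43}, i.e.\ via comparison with equivariant Borel--Moore homology, so your argument is tied to $k=\C$ and to the cycle-map machinery of Section \ref{s:chowhavscoha}, whereas the paper's proof is purely intersection-theoretic and valid over any algebraically closed field --- the only price being the rational coefficients required by Lemma \ref{push}, which your route avoids.
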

	
	The key ingredient of the proof of this result is a purely intersection-theoretic lemma. Following \cite[B.1.1]{Fulton:98}, we call a $k$-scheme algebraic if it is separated and of finite type over $\Spec k$. Thus, a variety is an algebraic scheme which is integral.
	
	\begin{lem} \label{push}
		Let $f: X \to Y$ be a surjective, proper morphism of algebraic $k$-schemes. Then the push-forward $f_*: A_*(X)_\Q \to A_*(Y)_\Q$ is surjective.
	\end{lem}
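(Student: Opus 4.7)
The plan is the standard reduction: it suffices to show that the class $[V]$ of every subvariety $V \subseteq Y$ lies in the image of $f_*$, since such classes generate $A_*(Y)$. So I fix a subvariety $V \subseteq Y$ and look for a subvariety $W \subseteq X$ with $f(W) = V$ and such that $f|_W \colon W \to V$ is generically finite; if such a $W$ exists, then $f_* [W] = [k(W) : k(V)] \cdot [V]$, and since the degree is a positive integer, dividing by it (here is where rational coefficients enter) gives $[V] \in f_* A_*(X)_\Q$.

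To construct $W$, I would use the generic point $\eta$ of $V$. Surjectivity of $f$ implies that the scheme-theoretic fiber $f^{-1}(\eta)$ is nonempty; since $X$ is of finite type over $k$ and $f$ is of finite type, $f^{-1}(\eta)$ is a nonempty scheme of finite type over $k(\eta)$ and therefore possesses a closed point $w$. Its residue field $k(w)$ is a finite extension of $k(\eta)$. Taking $W := \overline{\{w\}} \subseteq X$ with its reduced scheme structure, $W$ is an integral closed subscheme. The image $f(W)$ is closed in $Y$ (here properness of $f$ is used so that $f$ is a closed map) and contains $f(w) = \eta$, hence $f(W) = \overline{\{\eta\}} = V$. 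Moreover $k(W) = k(w)$ is finite over $k(V) = k(\eta)$, so $f|_W$ is generically finite.

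The main obstacle is really just the construction of $W$ with the correct dimension: if one took any component of $f^{-1}(V)$ surjecting onto $V$, its dimension might strictly exceed $\dim V$, in which case the proper push-forward vanishes by definition and gives no information. The generic-point trick precisely ensures $\dim W = \dim V$. Everything else is a direct application of the definition of proper push-forward and the fact that cycles of subvarieties generate $A_*(Y)_\Q$.
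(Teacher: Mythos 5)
Your proof is correct and follows the same strategy as the paper's: reduce to producing, for each subvariety $V \subseteq Y$, a subvariety $W \subseteq X$ dominating $V$ with $\dim W = \dim V$, so that $f_*[W]$ is a positive integer multiple of $[V]$ and rational coefficients let you divide by the degree. Your construction of $W$ --- take a closed point $w$ of the fiber over the generic point of $V$, note that $k(w)$ is finite over $k(V)$ by the Nullstellensatz, and pass to $\overline{\{w\}}$ --- is precisely what the paper's explicit Noether-normalization and spreading-out argument establishes in commutative-algebra language, so the two proofs are essentially identical.
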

	
	\begin{proof}
		It is obviously sufficient to prove that, for every dominant morphism $f: X \to Y$ of an algebraic scheme $X$ to a variety $Y$, there exists a subvariety $W$ of $X$ of dimension $\dim W = \dim Y$ which dominates $Y$. This is a local statement, so we may assume $X$ and $Y$ to be affine, say $X = \Spec B$ and $Y = \Spec A$. The morphism $f$ corresponds to an extension $A \into B$ of rings. We therefore need to show that there exists a prime ideal $\qq$ of $B$ with $\qq \cap A = (0)$ such that the induced extension
		$$
			Q(B/\qq) \mid Q(A)
		$$
		is finite. Let $K = Q(A)$ and $R = B \otimes_A K$. By Noether Normalization, there exist $b_1,\ldots,b_n \in R$, algebraically independent over $K$, such that $K[b_1,\ldots,b_n] \sub R$ is a finite (and hence integral) ring-extension. Without loss of generality, we may assume $b_1,\ldots,b_n \in B$. Choose a set of generators $c_1,\ldots,c_s$ of $R$ as a $K[b_1,\ldots,b_n]$-algebra and polynomials $p_i(T) \in K[b_1,\ldots,b_n][T]$ such that $p_i(c_i) = 0$. We find an element $s \in A - \{0\}$ such that the coefficients of all the $p_i$'s lie in $A_s[b_1,\ldots,b_n]$ and $p_i(c_i) = 0$ holds in $B_s$. This implies that $B_s$ is an integral $A_s[b_1,\ldots,b_n]$-algebra which yields the surjectivity of the map $\Spec B_s \to \Spec A_s[b_1,\ldots,b_n]$. We consider the prime ideal $\pp' = (b_1,\ldots,b_n)$ of $A_s[b_1,\ldots,b_n]$ and find a prime ideal $\qq'$ of $B_s$ which lies above it. Then $B_s/\qq'$ is an integral extension of $A_s[b_1,\ldots,b_n]/\pp' = A_s$ and therefore, setting $\qq = \qq' \cap B$, the extension
		$$
			Q(B/\qq) = Q(B_s/\qq') \mid Q(A_s) = Q(A)
		$$
		is finite.
	\end{proof}
	
	\begin{proof}[\normalfont \textit{Proof of Theorem \ref{taut}}]
		Let $R_d^{\unst}$ be the complement of $R_d^{\sst}$ in $R_d$. Then, we have an exact sequence
		$$
			A_m^G(R_d^{\unst}) \to A_m^G(R_d) \to A_m^G(R_d^{\sst}) \to 0,
		$$
		where $G = G_d$. For a decomposition $d = p+q$, let $R_{p,q}$ be the closed subset of $R_d$ of all representations which possess a subrepresentation of dimension vector $p$. It is the $G$-saturation of $\smash{\uppertripq}$. The $G$-action gives a surjective, proper morphism
		$$
			\uppertripq \times^{P_{p,q}} G \to R_{p,q},
		$$
		where $P_{p,q}$ is the parabolic $\big( \smash{\begin{smallmatrix} G_p & * \\ & G_q \end{smallmatrix}} \big)$. The unstable locus $R_d^{\unst}$ equals the union $\bigcup R_{p,q}$ over all decompositions $d = p+q$ where the slope of $p$ is larger than the slope of $q$. Let us call these decompositions $\theta$-forbidden. We obtain, using Lemma \ref{push} and \cite[Ex.\ 1.3.1 (c)]{Fulton:98}, that the sequence
		$$
			\bigoplus A_m^G\left( \uppertripq \times^{P_{p,q}} G \right) \to A_m^G(R_d) \to A_m^G(R_d^{\sst}) \to 0
		$$
		is exact when passing to rational coefficients --- the direct sum being taken over all forbidden decompositions $d=p+q$. Setting $n = \dim R_d - m$, we identify 
		$$
			A_m^G\left( \uppertripq \times^{P_{p,q}} G \right) \cong A_{P_{p,q}}^{n+\chi(p,q)} (\uppertripq) \cong A_{G_p \times G_q}^{n+\chi(p,q)}(R_p \times R_q)
		$$ 
		like in Section \ref{s:sstchowha}. As the equivariant product map $\smash{A_{G_p}^*}(R_p) \otimes \smash{A_{G_q}^*}(R_q) \to \smash{A_{G_p \times G_q}^*}(R_p \times R_q)$ is an isomorphism (which is clear from the explicit description given above), we have shown that
		$$
			\bigoplus_{\substack{p+q=d \\ \text{forbidden}}} \bigoplus_{k+l=n+\chi(p,q)} A_{G_p}^k(R_p)_\Q \otimes A_{G_q}^l(R_q)_\Q \to A_{G_d}^n(R_d)_\Q \to A_{G_d}^n(R_d^{\sst})_\Q \to 0
		$$
		is an exact sequence. The first map in this sequence is precisely the ChowHa-multiplication. This proves the theorem.
	\end{proof}

	\section{The Primitive Part of the Semi-Stable ChowHa}\label{s:primitive}
	
	As a next step, we analyze the kernel of the pull-back $A_G^*(R_d^{\sst}) \to A_G^*(R_d^{\st})$ induced by the open embedding of the stable locus into the semi-stable locus. A semi-stable representation $M \in R_d$ is not stable if and only if there exists a proper subrepresentation of the same slope. For a decomposition $d = p+q$ into sub-dimension vectors of the same slope, we define $\smash{R_{p,q}^{\sst}}$ as the subset of those $M \in \smash{R_d^{\sst}}$ which admit a subrepresentation of dimension vector $p$. Therefore, the set of properly semi-stable representations is the union
	$$
		R_d^{\sst} - R_d^{\st} = \bigcup R_{p,q}^{\sst}
	$$
	over all decompositions $d = p+q$ such that $p$ and $q$ have the same slope and are both non-zero. We have, yet again, a surjective, proper morphism 
	$$
		\big( \smash{ \begin{smallmatrix} R_p^{\sst} & * \\ & R_q^{\sst} \end{smallmatrix} } \big) \to R_{p,q}^{\sst}.
	$$
	A result of Totaro (see \cite[Le.\ 6.1]{Totaro:99}), which can easily be transferred to equivariant Chow rings, shows that the exterior product $\smash{A_{G_p}^*}(R_p^{\sst}) \otimes \smash{A_{G_q}^*}(R_q^{\sst}) \to \smash{A_{G_p \times G_q}^*}(R_p^{\sst} \times R_q^{\sst})$ is an isomorphism. Following the arguments of the proof of Theorem \ref{taut}, we obtain:
	
	\begin{thm} \label{prim}
		The kernel of the surjection $\AA_d^{\theta-\sst} \to \AA_d^{\theta-\st}$ is the sum
		$$
			\sum \AA_p^{\theta-\sst} * \AA_q^{\theta-\sst} 
		$$
		over all decompositions $d = p+q$ into non-zero sub-dimension vectors of the same $\theta$-slope.
	\end{thm}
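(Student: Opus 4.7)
The plan is to transport the proof of Theorem \ref{taut} verbatim into the semi-stable world: instead of analyzing $R_d$ with its unstable stratum excised, I would analyze $R_d^{\sst}$ with its properly semi-stable stratum excised. All of the geometric input is already assembled in the paragraph preceding the statement, so the argument becomes largely a matter of bookkeeping.

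First I would write down the localization sequence for the open embedding $R_d^{\st} \hookrightarrow R_d^{\sst}$ with closed complement $R_d^{\sst}-R_d^{\st}$:
$$
A_m^G(R_d^{\sst}-R_d^{\st})_\Q \to A_m^G(R_d^{\sst})_\Q \to A_m^G(R_d^{\st})_\Q \to 0,
$$
where $G=G_d$. Combining the decomposition $R_d^{\sst}-R_d^{\st}=\bigcup R_{p,q}^{\sst}$, running over pairs $(p,q)$ of nonzero sub-dimension vectors of the same slope $\mu$ summing to $d$, with \cite[Ex.\ 1.3.1 (c)]{Fulton:98} (which handles the finite union) and Lemma \ref{push} applied to the proper surjection $Z_{p,q}^{\sst}\times^{P_{p,q}}G \twoheadrightarrow R_{p,q}^{\sst}$, I would obtain the exact sequence
$$
\bigoplus_{(p,q)} A_m^G\bigl(Z_{p,q}^{\sst}\times^{P_{p,q}}G\bigr)_\Q \to A_m^G(R_d^{\sst})_\Q \to A_m^G(R_d^{\st})_\Q \to 0.
$$

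Next I would identify each summand with the correct tensor product. Setting $n=\dim R_d - m$, the standard Levi-to-parabolic chain of morphisms from Section \ref{s:sstchowha} gives
$$
A_G^{n+\chi(p,q)}\bigl(Z_{p,q}^{\sst}\times^{P_{p,q}}G\bigr) \;\cong\; A_{G_p\times G_q}^{n+\chi(p,q)}(R_p^{\sst}\times R_q^{\sst}),
$$
and Totaro's equivariant Künneth isomorphism (cited immediately before the theorem) splits this as $\bigoplus_{k+l=n+\chi(p,q)} A_{G_p}^k(R_p^{\sst}) \otimes A_{G_q}^l(R_q^{\sst})$. The composite with the push-forward to $A_G^n(R_d^{\sst})$ is, by construction of the multiplication on page \pageref{coha_mult}, exactly the semi-stable ChowHa product $\AA_p^{\sst}\otimes\AA_q^{\sst} \to \AA_d^{\sst}$. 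The image of the leftmost term is therefore $\sum \AA_p^{\sst}*\AA_q^{\sst}$, which is the claim.

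The only nontrivial check is that Totaro's equivariant Künneth really applies to $R_p^{\sst}\times R_q^{\sst}$; this in turn relies on the vanishing of odd-degree equivariant cohomology of the semi-stable loci, which was precisely the content of Theorem \ref{cyc}. With that input in hand, the argument is a direct adaptation of the proof of Theorem \ref{taut}, the only difference being that the "$\theta$-forbidden" decompositions $\mu(p)>\mu(q)$ are replaced by decompositions into two nonzero pieces of the same slope as $d$, since these are exactly the decompositions that obstruct stability within semi-stability.
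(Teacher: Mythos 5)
Your proposal is correct and follows essentially the same route as the paper: the paper likewise decomposes $R_d^{\theta-\sst}-R_d^{\theta-\st}$ as the union of the loci $R_{p,q}^{\sst}$ over same-slope decompositions, invokes the proper surjection from the associated bundle over $Z_{p,q}^{\sst}$ together with Lemma \ref{push}, applies Totaro's K\"unneth isomorphism transferred to equivariant Chow rings, and then simply says ``following the arguments of the proof of Theorem \ref{taut}.'' Your write-up is in fact slightly more explicit than the paper's about why the K\"unneth map is an isomorphism and about the composite being the ChowHa multiplication, but the substance is identical.
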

	
	In other words, the graded vector space $\AA^{\theta-\st,\mu} = \bigoplus_{d \in \Gamma^{\theta,\mu}} \AA_d^{\theta-\st}$ equipped with the trivial multiplication (by which we mean that the product of two homogeneous elements of positive degree is set to be zero) is isomorphic to the quotient $\AA^{\theta-\sst,\mu}/(\AA_+^{\theta-\sst,\mu} * \AA_+^{\theta-\sst,\mu})$ of the semi-stable ChowHa modulo the square of its augmentation ideal $\smash{\AA_+^{\theta-\sst}}$.
	
	Again, we consider the case of a symmetric quiver $Q$. We have deduced from Theorem \ref{taut} that $\AA^{\theta-\sst,\mu}$ is free super-commutative over $V^{\theta,\mu} = \bigoplus_{d \in \Gamma^{\theta,\mu}} V_d$. The quotient of the augmentation ideal of a free super-commutative algebra by its square is isomorphic to the primitive part of the algebra, i.e.\ in our case
	$$
		V_d \cong \AA_d^{\theta-\st} = A_{G_d}^*(R_d^{\theta-\st}) \cong A^*_{PG_d}(R_d^{\theta-\st}) \otimes A_{\G_m}^*(\pt)
	$$
	for every $d \neq 0$. As $V_d = V_d^{\mathrm{prim}} \otimes \Q[z]$, we deduce that 
	$$
		V_{d,k}^{\prim} = \begin{cases} A_{PG_d}^{\frac{1}{2}(k-\chi(d,d))}(R_d^{\st}), & k \equiv \chi(d,d) \text{ (mod 2)} \\ 0, & k \not\equiv \chi(d,d) \text{ (mod 2).} \end{cases}
	$$
	Assuming that $R_d^{\theta-\st}$ is non-empty and denoting by $M_d^{\theta-\st}$ the geometric quotient $R_d^{\theta-\st}/PG_d$ (which we call the stable moduli space), we get 
	$$
		A_{PG_d}^j(R_d^{\theta-\st}) = A_{\dim R_d-j}^{PG_d}(R_d^{\theta-\st}) = A_{\dim R_d - \dim PG_d -j}(M_d^{\theta-\st})
	$$
	and $\dim M_d^{\theta-\st} = \dim R_d - \dim PG_d = 1 - \chi(d,d)$. This yields that the Donaldson--Thomas invariants of $Q$ are given by the Chow--Betti numbers of the stable moduli spaces, more precisely:
	
	\begin{thm} \label{DT=ChowBetti}
		For a symmetric quiver $Q$, a stability condition $\theta$ and a dimension vector $d \neq 0$, the Donaldson--Thomas invariant $\Omega_{d,k}$ equals
		$$
			\Omega_{d,k} = \begin{cases} \dim A_{1 - \frac{1}{2}(k+\chi(d,d))}(M_d^{\st}), & \text{if } k \equiv \chi(d,d) \text{ (mod 2) and } M_d^{\st} \neq \emptyset \text{ and}\\ 0, & \text{otherwise.} \end{cases}
		$$
		In particular, $\Omega_{d,k}$ can only be non-zero if $\chi(d,d) \leq k \leq 2 - \chi(d,d)$.
	\end{thm}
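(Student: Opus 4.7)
The plan is to assemble the theorem from the three main inputs already in place: Efimov's theorem, which writes $\AA(Q) \cong S^*(V)$ as a free super-commutative algebra with $V = V^{\prim} \otimes \Q[z]$; Theorem \ref{taut}, which presents the semi-stable ChowHa as a quotient of $\AA$ by relations supported in strictly decreasing slopes; and Theorem \ref{prim}, which says that $\AA_d^{\theta-\st}$ is the quotient of $\AA_d^{\theta-\sst}$ by the square of the positive part. The strategy is to identify $V_{d,k}^{\prim}$ cleanly with a Chow group of the stable moduli space, and then invoke Efimov's theorem to conclude $\Omega_{d,k}=\dim V_{d,k}^{\prim}$.

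First I would verify that Theorem \ref{taut}, combined with Efimov, shows that $\AA^{\theta-\sst,\mu}$ is itself free super-commutative over the graded sub-super-space $V^{\theta,\mu} = \bigoplus_{d \in \Gamma^{\theta,\mu}} V_d$. This is an abstract fact about free super-commutative algebras on a $\Gamma$-graded generating space, once one notes that the ideal described in Theorem \ref{taut} is precisely the sub-super-space of $S^*(V)$ spanned by monomials containing generators of two distinct slopes. Having done this, the standard identification of the primitive part of a free (super-)commutative algebra as augmentation-ideal modulo its square, together with Theorem \ref{prim}, gives $V_d \cong \AA_d^{\theta-\st}$ as bigraded vector spaces.

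Next I would pass from $V_d$ to $V_d^{\prim}$ by dividing out the $\Q[z]$-factor. Since $R_d^{\theta-\st}$ admits a geometric $PG_d = G_d/\G_m$-quotient $M_d^{\theta-\st}$ (when nonempty), the equivariant Chow ring factors as
\[
A_{G_d}^*(R_d^{\theta-\st}) \cong A_{PG_d}^*(R_d^{\theta-\st}) \otimes_\Q A_{\G_m}^*(\pt) \cong A^*(M_d^{\theta-\st}) \otimes_\Q \Q[z],
\]
with the polynomial factor absorbing the $\Q[z]$ appearing in $V_d = V_d^{\prim} \otimes \Q[z]$. This yields $V_{d,k}^{\prim} \cong A_{PG_d}^{(k-\chi(d,d))/2}(R_d^{\theta-\st})$ when $k \equiv \chi(d,d) \pmod 2$, and zero otherwise, in accordance with the bigrading convention of Section \ref{s:structurecoha}. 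Translating from cohomological to homological indexing via $\dim M_d^{\theta-\st} = 1 - \chi(d,d)$ produces the formula
\[
\Omega_{d,k} = \dim A_{1 - (k+\chi(d,d))/2}(M_d^{\theta-\st}).
\]
The vanishing range $\chi(d,d) \leq k \leq 2 - \chi(d,d)$ then falls out automatically since Chow groups $A_i$ vanish for $i < 0$ and $i > \dim M_d^{\theta-\st}$.

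I expect the subtle point to be the bookkeeping that matches three gradings: the bigrading of $\AA^{\theta-\sst,\mu}$ from Section \ref{s:structurecoha}, the shift from the super-commutative twist $\star$ versus $*$, and the conversion between the cohomological equivariant Chow grading and the homological Chow grading on $M_d^{\theta-\st}$. In particular, one has to check that the sign twist defining $\star$ has no effect on primitive dimensions (only the super-algebra structure changes, not the underlying bigraded vector space) and that the quotient by the $\G_m$-factor really corresponds to discarding the $\Q[z]$-tensor factor in $V_d$. These are routine but easy to mis-index; everything else is essentially already packaged in the paragraphs preceding the theorem statement.
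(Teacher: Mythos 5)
Your proposal follows essentially the same route as the paper: deduce from Theorem \ref{taut} and Efimov's theorem that $\AA^{\theta-\sst,\mu}$ is free super-commutative over $V^{\theta,\mu}$, use Theorem \ref{prim} to identify $V_d$ with $\AA_d^{\theta-\st}$ as the augmentation ideal modulo its square, split off the $\Q[z]$-factor via $A_{G_d}^*(R_d^{\theta-\st}) \cong A_{PG_d}^*(R_d^{\theta-\st}) \otimes A_{\G_m}^*(\pt)$, and convert to homological indexing using $\dim M_d^{\theta-\st} = 1-\chi(d,d)$. The bookkeeping concerns you flag (the bigrading, the $\star$ versus $*$ twist not affecting dimensions, the $\G_m$-factor) are handled the same way in the paper, so the argument is correct and matches.
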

	
	\begin{rem}
		The range for the non-vanishing of the DT invariants from the above theorem yields that the number $N_d(Q)$ in \cite[Cor.\ 4.1]{Efimov:12} can be chosen as $1 - \chi(d,d)$, i.e.\ the dimension of $M_d^{\theta-\st}$.
	\end{rem}

	\section{Examples} \label{s:examples}
	
	\subsection{The Two-Cycle Quiver}
	
	We start by illustrating the tensor product decomposition Corollary \ref{t:tensor}. There are exactly three connected symmetric quivers which are not wild, that is, for which a classification of their finite-dimensional representations up to isomorphism is known. Namely, these are
	\begin{itemize}
	\item the quiver $L_0$ of Dynkin type $A_1$ with a single vertex and no arrows,
	\item	the quiver $L_1$ of extended Dynkin type $\widetilde{A}_0$ consisting of a single vertex and a single loop,
	\item and the quiver $Q$ of extended Dynkin type $\widetilde{A}_1$ with two vertices $i$ and $j$ and single arrows $i\rightarrow j$ resp.~$j\rightarrow i$.
	\end{itemize}
	
	For the quivers $L_0$ and $L_1$, the structure of the CoHa is determined in \cite{KS:11}. Namely, we have
	$$\AA(L_0)\cong S^*({\Q}(1,1)[z])\mbox{ and }\AA(L_1)\cong S^*({\Q}(1,0)[z]),$$
	where $S^*$ denotes the free super-commutative algebra, ${\Q}(d,i)$ denotes a one-dimensional ${\Q}$-space placed in bidegree $(d,i)$, and $z$ denotes the element in bidegree $(0,2)$ whose existence is guaranteed by Theorem \ref{Efimov}.
	
	The structure of the CoHa of $Q$ is described in \cite[Cor.\ 2.5]{Franzen:15:CoHa_Modules}; here we give a simplified derivation of this result using the present methods. We consider the stability $\theta$ given by $\theta(d_i,d_j)=d_i$ (note that any non-trivial stability is equivalent to $\theta$ or $-\theta$ in the sense that the class of (semi\nobreakdash-)stable representation is the same). Let a representation $M$ of $Q$ of dimension vector $d$ be given by vector spaces $V_i$ and $V_j$ and linear maps $f:V_i\rightarrow V_j$, $g:V_j\rightarrow V_i$. We claim that this representation is $\theta$-semi-stable if and only if $V_i=0$, or $V_j=0$, or $\dim V_i=\dim V_j$ and $f$ is an isomorphism; moreover, it is $\theta$-stable if it is $\theta$-semi-stable and $\dim V_i$, $\dim V_j\leq 1$.
	
	The case $(\dim V_i)\cdot(\dim V_j)=0$ being trivial, we assume $\dim V_i$, $\dim V_j\geq 1$. Suppose $M$ is $\theta$-semi-stable. If $f$ is not injective, we choose a vector $0\not=v\in V_i$ in the kernel of $f$, yielding a subrepresentation $U$ of dimension vector $(1,0)$. Then we find $1=\mu(U)\leq\mu(M)=\dim V_i/(\dim V_i+\dim V_j)$, thus $\dim V_j=0$, a contradiction. Thus $f$ is injective, and $(V_i,f(V_i))$ defines a subrepresentation $U'$ of dimension vector $(\dim V_i,\dim V_i)$ of $M$. Then we find $1/2=\mu(U')\leq \mu(M)$, thus $\dim V_j\leq\dim V_i$, which already implies $\dim V_i=\dim V_j$ and shows that $f$ is an isomorphism. In this case, $M$ is $\theta$-semi-stable since the subrepresentations are of the form $(U,f(U))$, and thus of the same slope as $M$, for $U\subset V_i$ a $gf$-stable subspace. Since such subspaces always exist, we also see that stability forces $\dim V_i=1=\dim V_j$.
	
	This analysis provides identifications
	\begin{align*}
		A_{G_d}^*(R_d^{\theta-\sst}) &\cong A_{\Gl_n(k)}^*(\pt)\mbox{ for }d=(n,0)\mbox{ or }d=(0,n),\\
		A_{G_d}^*(R_d^{\theta-\sst}) &\cong A_{\Gl_n(k)}^*(M_{n\times n}(k))\mbox{ for }d=(n,n),
	\end{align*}
	which we recognize as the homogeneous parts of the CoHa of $L_0$ and $L_1$, respectively. These identifications obviously being compatible with the respective Hecke correspondences defining the multiplications, we see that
	\begin{align*}
		\AA^{\theta-{\sst},1}(Q)\cong \AA(L_0) &\cong \AA^{\theta-{\sst},0}(Q)\mbox{ and} \\
		\AA^{\theta-{\sst},1/2}(Q) &\cong \AA(L_1).
	\end{align*}
	By Corollary \ref{t:tensor}, we thus arrive at
	$$\AA(Q)\cong S^*\big( (\Q((1,0),1)\oplus\Q((0,1),1)\oplus\Q((1,1),0))[z] \big).$$
	
	\subsection{The Kronecker Quiver}
	
	Now we consider the Kronecker quiver $K_2$ with two vertices $i$ and $j$ and two arrows from $i$ to $j$. As we will use results from Section \ref{s:chowhavscoha}, we work over the field of complex numbers. Again we consider the stability $\theta(d_i,d_j)=d_i$. This is again a case where the representation theory of the quiver is known: up to isomorphism, there exist unique ($\theta$-stable) indecomposable representations $P_n$ resp.~$I_n$ for each of the dimension vectors $(n,n+1)$ resp.~$(n+1,n)$ for $n\geq 0$, and there exist one-parametric families $R_n(\lambda)$ of ($\theta$-semi-stable) indecomposables for each of the dimension vectors $(n,n)$ for $n\geq 0$ and $\lambda\in\P^1(\C)$. Arguing as in the first example, we can conclude that
	$$\AA^{\theta-{\sst},\mu(d)}(K_2)\cong S^*(\Q(d,1)[z])\mbox{ for }d=(n,n+1)\mbox{ or }d=(n+1,n),$$
	and
	$$\AA^{\theta-{\sst},\mu}(K_2)=0\mbox{ if }\mu\not\in\left\{0,\frac{1}{3},\frac{2}{5},\frac{3}{7},\ldots,\frac{1}{2},\ldots\frac{4}{7},\frac{3}{5},\frac{2}{3},1 \right\}.$$
	It remains to consider $\AA^{\theta-{\sst},1/2}(K_2)$.
	
	We construct a stratification of the $\theta$-semi-stable locus in $R_{(n,n)}(K_2)\cong M_{n\times n}(\C)\times M_{n\times n}(\C)$, on which $G=\Gl_n(\C)\times\Gl_n(\C)$ acts via $(g,h)\cdot(A,B)=(hAg^{-1},hBg^{-1})$. For $0\leq r\leq n$, we define $S_r$ as the $G$-saturation of the set of pairs of matrices
	$$\left( \left(\begin{array}{cc}E_r&0\\ 0&N\end{array}\right),\left(\begin{array}{cc}A&0\\ 0&E_{n-r}\end{array}\right) \right),$$
	where $E_i$ denotes an $i\times i$-identity matrix, $A$ denotes an arbitrary $r\times r$-matrix, and $N$ denotes a nilpotent $(n-r)\times(n-r)$-matrix. We claim that every $S_r$ is locally closed, their union equals the $\theta$-semi-stable locus, and the closure of $S_r$ equals the union of the $S_{r'}$ for $r'\leq r$.
	
	The representation $R_n(\lambda)$ is given explicitly by the matrices $(E_n,\lambda E_n+J_n)$ for $\lambda\not=\infty$, and by $(J_n,E_n)$ for $\lambda=\infty$, where $J_n$ is the nilpotent $n\times n$-Jordan block. As noted above, a $\theta$-semi-stable representation of $M$ of dimension vector $(n,n)$ is of the form
	$$M=R_{n_1}(\lambda_1)\oplus\ldots\oplus R_{n_k}(\lambda_k)$$
	for $n=n_1+\ldots+n_k$ and $\lambda_1,\ldots,\lambda_k\in \P^1(\C)$, uniquely defined up to reordering. Now we reorder the direct sum and assume that $\lambda_1,\ldots,\lambda_j\not=\infty$ and $\lambda_{j+1}=\ldots=\lambda_k=\infty$. Using the above explicit form of the representations $R_n(\lambda)$, we see that $M$ is represented by a pair of block matrices of the form
	$$\left( \left(\begin{array}{cc}E_r&0\\ 0&N\end{array}\right),\left(\begin{array}{cc}A&0\\ 0&E_{n-r}\end{array}\right) \right)$$
	with $N$ nilpotent and $A$ arbitrary. All claimed properties of the stratification follow.

	Now we claim that
	$$S_r\cong(\Gl_n(\C)\times \Gl_n(\C))\times^{\Gl_r(\C)\times \Gl_{n-r}(\C)}(M_r(\C)\times N_{n-r}(\C)),$$
	where the group $\Gl_r(\C) \times \Gl_{n-r}(\C)$ is considered as a subgroup of $\Gl_n(\C) \times \Gl_n(\C)$ by mapping a pair $(g_1,h_4)$ to $\big( \big(\begin{smallmatrix} g_1 & 0 \\ 0 & h_4 \end{smallmatrix}\big), \big(\begin{smallmatrix} g_1 & 0 \\ 0 & h_4 \end{smallmatrix}\big) \big)$.
	We consider the stabilizer of the set of matrices in the above block form. So we take $g,h\in\Gl_n(\C)$, written as block matrices
	$$g=\left(\begin{array}{cc}g_1&g_2\\ g_3&g_4\end{array}\right),\;
	h=\left(\begin{array}{cc}h_1&h_2\\ h_3&h_4\end{array}\right),$$
	and assume we are given matrices $A,A'\in M_{r\times r}(\C)$ and $N,N'\in N_{n-r}(\C)$, the nilpotent cone of $(n-r)\times(n-r)$-matrices, such that
	$$\left(\begin{array}{cc}h_1&h_2\\ h_3&h_4\end{array}\right)\left(\begin{array}{cc}E_r&0\\ 0&N\end{array}\right)=\left(\begin{array}{cc}E_r&0\\ 0&N'\end{array}\right)\left(\begin{array}{cc}g_1&g_2\\ g_3&g_4\end{array}\right),$$
	$$\left(\begin{array}{cc}h_1&h_2\\ h_3&h_4\end{array}\right)\left(\begin{array}{cc}A&0\\ 0&E_{n-r}\end{array}\right)=\left(\begin{array}{cc}A'&0\\ 0&E_{n-r}\end{array}\right)\left(\begin{array}{cc}g_1&g_2\\ g_3&g_4\end{array}\right).$$
	From these equations we first conclude $h_1=g_1$ and $h_4=g_4$, thus $h_2=A'g_2$ and $g_2=h_2N$, which yields $h_2=A'h_2N$. By induction, this implies $h_2=(A')^kh_2N^k$ for all $k\geq 1$. But $N$ is nilpotent, thus $h_2=0$, thus $g_2=0$. Similarly, we can conclude $h_3=0$ and $g_3=0$. But then $g_1$ and $g_4$ are invertible, and $A'=g_1Ag_1^{-1}$ as well as $N'=g_4Ng_4^{-1}$. This proves the claim.

	To obtain information on the Chow groups from this stratification using Lemma \ref{filt}, we first have to analyze the Chow groups of  nilpotent cones. 

	The nilpotent cone $N_d(\C)$ is irreducible of dimension $d^2-d$, and the $\Gl_d(\C)$-orbits $\mathcal{O}_\lambda$ in $N_d$ are parametrized by partitions $\lambda$ in $\mathcal{P}_d$, the set of partitions of $d$ (we denote by $\mathcal{P}$ the union of all $\mathcal{P}_d$'s). The stabilizer $G_\lambda$ of a point in $\mathcal{O}_\lambda$ has dimension $\langle\lambda,\lambda\rangle=\sum_{i,j}\min(m_i,m_j)m_im_j$, and its reductive part is isomorphic to $\prod_i\Gl_{m_i}(\C)$, where $m_i=m_i(\lambda)$ denotes the multiplicity of $i$ as a part of $\lambda$, for $i\geq 1$. We can thus apply Lemma \ref{filt} and reduce the structure group --- note that in characteristic zero, an orbit is isomorphic to the quotient of the group by the stabilizer of a point --- to get
	$$
		A^*_{\Gl_d(\C)}(N_d(\C))\cong \bigoplus_{\lambda\in\mathcal{P}_d}A^{*+d-\langle\lambda,\lambda\rangle}_{G_\lambda}(\pt),
	$$
	and the equivariant cycle map for $N_d(\C)$ is an isomorphism.

	This enables us to again apply Lemma \ref{filt}, this time to the stratification $(S_r)_r$. We compute (using $\codim S_r=n-r$):
	\begin{align*}
		A_{\Gl_n(\C)\times\Gl_n(\C)}^*(R_{(n,n)}^{\theta-\sst}(K_2)) &\cong \bigoplus_{r=0}^n A_{\Gl_n(\C)\times\Gl_n(\C)}^{*-n+r}(S_r) \\
		&\cong \bigoplus_{r=0}^n A_{\Gl_r(\C)\times\Gl_{n-r}(\C)}^{*-n+r}(M_r(\C)\times N_{n-r}(\C)) \\
		&\cong \bigoplus_{r=0}^n A_{\Gl_r(\C)}^{*}(M_r(\C))\otimes A_{\Gl_{n-r}(\C)}^{*-n+r}(N_{n-r}(\C)) \\
		&\cong \bigoplus_{r=0}^n A_{\Gl_r(\C)}^{*}(\pt)\otimes\bigoplus_{\lambda\in\mathcal{P}_{n-r}}A_{G_\lambda}^{*-\langle\lambda,\lambda\rangle}(\pt)
	\end{align*}

	Summing over all $n$, we obtain
	$$
		\AA_{(*,2*)}^{\theta-{\sst},1/2}(K_2) \cong \bigoplus_{n\geq 0}A_{\Gl_n(\C)\times\Gl_n(\C)}^*(R_{(n,n)}^{\theta-\sst}(K_2)) \cong
		\bigg( \bigoplus_{r\geq 0}A_{\Gl_r(\C)}^*(\pt) \bigg) \otimes \bigg( \bigoplus_{\lambda\in\mathcal{P}}A_{G_\lambda}^{*-\langle\lambda,\lambda\rangle}(\pt) \bigg).
	$$
	The generating function of the bigraded space $\AA^{\theta-\sst,1/2}(K_2)$ therefore equals
	$$
		\bigg( \sum_{n\geq 0}\frac{t^n}{(1-q)\ldots(1-q^n)} \bigg) \cdot \bigg( \sum_\lambda\frac{q^{-\langle\lambda,\lambda\rangle}t^{|\lambda|}}{\prod_{i\geq 1}((1-q)\ldots (1-q^{m_i}))} \bigg)=\prod_{i\geq 0}\frac{1}{1-q^it}\cdot\prod_{i\geq 1}\frac{1}{1-q^it}
	$$
	by standard identities. We thus arrive at an isomorphism of bigraded $\Q$-spaces
	$$\AA^{\theta-{\sst},1/2}(K_2)\cong S^*((\Q((1,1),0)\oplus\Q((1,1),2))[z]).$$
	
	However, this is not an isomorphism of algebras, since we will now exhibit an example showing that the algebra $\AA^{\theta-{\sst},1/2}(K_2)$ is not super-commutative.

	We use the algebraic description of the CoHa of Section \ref{s:structurecoha} together with Theorem \ref{taut}. We have
	$$\AA_{(m,n)}(K_2)\cong \Q[x_1,\ldots,x_m,y_1,\ldots,y_n]^{S_m\times S_n}$$
	with multiplication given as in Section \ref{s:structurecoha}. By Theorem \ref{taut}, $\AA_{(1,1)}^{\theta-{\sst},1/2}(K_2)$ is the factor of $\AA_{(1,1)}(K_2)\cong\Q[x_1,y_1]$ by the image of the multiplication map $\AA_{(1,0)}(K_2)\otimes\AA_{(0,1)}(K_2)\rightarrow\AA_{(1,1)}(K_2)$, thus 
	$$\AA_{(1,1)}^{\theta-{\sst},1/2}(K_2)\cong\Q[x,y]/(x-y)^2.$$
	Again by Theorem \ref{taut}, $\AA_{(2,2)}^{\theta-{\sst},1/2}(K_2)$ is the factor of $\AA_{(2,2)}(K_2)$ by the image of the multiplication map $$\AA_{(2,1)}(K_2)\otimes\AA_{(0,1)}(K_2)\oplus\AA_{(1,0)}(K_2)\otimes\AA_{(1,2)}(K_2)\rightarrow\AA_{(1,1)}(K_2).$$
	The degree of an element in this image is at least $-\chi((2,1),(0,1))=-\chi((1,0),(1,2))=3$.
	A direct calculation shows that for the elements $1,x,y\in\AA_{(1,1)}^{\theta-{\sst},1/2}(K_2)$, we have in $\AA_{(2,2)}^{\theta-{\sst},1/2}(K_2)$:
	\begin{align*}
		1*1 &= 2, \\
		1*x &= y1+y2, \\
		x*1 &= 2(x1+x2)-(y1+y2), \\
		1*y &= -(x1+x2)+2(y1+y2), \text{ and}\\
		y*1 &= x1+x2.
	\end{align*}
	In particular, the (anti-)commutator of $1$ and $x$ does not vanish.

	In light of the previous description of $\AA^{\theta-{\sst},1/2}(K_2)$, it can be expected that there exists a natural filtration on $\AA^{\theta-{\sst},1/2}(K_2)$ such that the associated graded algebra is isomorphic, as a bigraded algebra, to $S^*\big((\Q((1,1),0)\oplus\Q((1,1),2))[z]\big)$.

	\subsection{DT Invariants as Chow--Betti Numbers}

	Next, we illustrate Theorem \ref{DT=ChowBetti}. We consider the symmetric quiver $Q$ with two vertices $i$ and $j$ and $n\geq 1$ arrows from $i$ to $j$ and from $j$ to $i$, and the dimension vector $d=(1,r)$ for $r\leq n$. To determine the quantized Donaldson--Thomas invariant $\smash{\Omega_{d,k}}$, we use the stability $\theta=(r,-1)$, for which $d$ is coprime. Therefore, $\smash{\Omega_{d,k}}=\smash{\Omega_{d,k}^\theta}$ equals the (suitably shifted) Poincar\'e polynomial of the cohomology of the moduli space $R_d^{\theta-\sst}(Q)/PG_d$, which is isomorphic to a vector bundle over the Grassmannian $\Gr_r(k^n)$ \cite[Sect.\ 6.1]{Reineke:15}. By Theorem \ref{DT=ChowBetti}, we can also compute $\smash{\Omega_{d,k}}$ as the (suitably shifted) Poincar\'e polynomial of the Chow ring of the moduli space $R_d^{0-\st}(Q)/PG_d$. Again by \cite{Reineke:15}, this moduli space is isomorphic to the space $X$ of $n\times n$-matrices of rank $r$. Mapping such a matrix to its image defines a $\Gl_n(k)$-equivariant fibration $X\rightarrow \Gr_r(k^n)$, whose fibre is isomorphic to the space of $r\times n$-matrices of highest rank. The latter being open in an affine space, its Chow ring reduces to $\Q$, thus the Chow ring of $X$ is isomorphic to the Chow ring of $\Gr_r(k^n)$ as expected.

	\subsection{Multiple Loop Quivers}

	Finally, we consider the quiver $L_m$ with a single vertex and $m\geq 2$ loops. The quantized Donaldson--Thomas invariants are computed explicitly in \cite{Reineke:12}.
	
	All stability conditions are equivalent for this quiver. Let $M_d^{\simp}$ be the moduli space of simple (which is the same as stable) representations of $L_m$ of dimension $d$. It is obtained as the geometric quotient $R_d^{\simp}/\PGl_d$. The Chow ring $A^*(M_d^{\simp})_\Q = A_{\PGl_d}^*(R_d^{\simp})_\Q$ (we will always work with rational coefficients in this subsection and therefore neglect it in the notation) is a quotient of the equivariant Chow ring $A_{\PGl_d}^*(R_d) = A_{\PGl_d}^*(\pt)$. The group of characters of a maximal torus of $\Gl_d$ identifies with the free abelian group in letters $x_1,\ldots,x_d$, the natural action of the Weyl group $W = S_d$ being the permutation action. A maximal torus of $\PGl_d$ is given by the quotient of the chosen maximal torus of $\Gl_d$ by the diagonally embedded multiplicative group. The corresponding Weyl group is also $S_d$ and the character group is then the submodule
	$$
		X_d = \mathrm{Sp}_{(d-1,1)} = \{a_1x_1 + \ldots + a_dx_d \mid a_1 + \ldots + a_d = 0 \}.
	$$
	The symmetric algebra $\Sym(X_d)$ over $X_d$ is the subalgebra of $\smash{\Q[x_1,\ldots,x_d]}$ generated by $x_j - x_i$ (with $i < j$) and the equivariant Chow ring $A_{\PGl_d}^*(R_d)$ is therefore $\Sym(X_d)^{S_d}$ which identifies with a subalgebra of $A_{G_d}^*(\pt) = \smash{\Q[x_1,\ldots,x_d]^{S_d}}$. As in the proofs of Theorems \ref{taut} and \ref{prim}, the kernel of $A_{\PGl_d}^*(R_d) \to A_{\PGl_d}^*(\smash{R_d^{\simp}})$ is then given by the image of
	$$
		\bigoplus_{\substack{p+q=d\\p,q > 0}} A_{\PGl_d}^*(Z_{p,q} \times^{P_{p,q}} \PGl_d) \to A_{\PGl_d}^*(R_d),
	$$
	where $P_{p,q}$ is the obvious parabolic subgroup of $\PGl_d$ --- this, by the way, can be done for an arbitrary quiver and for the kernels $A_{PG_d}^*(R_d) \to A_{PG_d}^*(\smash{R_d^{\theta-\sst}})$ and $A_{PG_d}^*(\smash{R_d^{\theta-\sst}}) \to A_{PG_d}^*(\smash{R_d^{\theta-\st}})$. The ring $A_{\PGl_d}^*(Z_{p,q})$ is isomorphic to $\smash{\Sym(X_d)^{S_p \times S_{q}}}$ and the push-forward map 
	$$
		m_{p,q}: A_{\PGl_d}^*(Z_{p,q} \times^{P_{p,q}} \PGl_d) \to A_{\PGl_d}^*(R_d)
	$$ 
	can be described algebraically and looks just like the explicit formula from \cite[Thm.\ 2]{KS:11}, i.e.\ given by a shuffle product with kernel $\prod_{i=1}^p \prod_{j=1}^{q} (x_{p+j} - x_i)^{m-1}$ . The relations in $A_{\PGl_d}^*(R_d)$ which present $A^*(\smash{M_d^{\simp}})$ thus have at least degree $(m-1)(d-1)$. In other words, for every $0 \leq i < (m-1)(d-1)$, we get
	$$
		A^i(M_d^{\simp}) \cong A_{\PGl_d}^i(R_d) = \Sym^i(X_d)^{S_d}.
	$$
	The generating series of $\Sym(X_d)^{S_d}$ is
	$$
		\frac{1}{(1-q^2)\ldots(1-q^d)} = \sum_{i \geq 0} \sharp\{ (k_2,\ldots,k_d) \mid 2k_2 + \ldots + dk_d \} q^i
	$$
	and using Theorem \ref{DT=ChowBetti}, we obtain a description of the first few Donaldson--Thomas invariants:
	
	\begin{propsub}
		For the $m$-loop quiver, the Donaldson--Thomas invariant $\Omega_{d,k}$ for a non-negative integer $d$ and an integer $k$ of the same parity as $(1-m)d^2$ satisfying
		$$
			(1-m)d^2 \leq k < (1-m)(d^2 - 2d + 2)
		$$
		computes as
		$$
			\Omega_{d,k} = \sharp \big\{ (k_2,\ldots,k_d) \mid 2k_2 + \ldots + dk_d = \frac{1}{2}((m-1)d^2 + k) \big\}.
		$$
	\end{propsub}

	We conclude the subsection with a computation of the numbers $\Omega_{2,k}$. The ring $\Sym(X_2)^{S_2}$ is the subalgebra of $\smash{\Q[x_1,x_2]^{S_2}}$ which is generated by $(x_2-x_1)^2$. Abbreviate $\Delta = x_2 - x_1$. As a $\smash{\Sym(X_2)^{S_2}}$-module, $\Sym(X_2)$ is generated by $1$ and $\Delta$. The push-forward map $m_{1,1}: \Sym(X_2) \to \Sym(X_2)^{S_2}$ sends $f(x_1,x_2)$ to
	$$
		\big( f(x_1,x_2)+ (-1)^{m-1} f(x_2,x_1) \big)\Delta^{m-1}
	$$
	and therefore, the image of $m_{1,1}$ is the ideal of $\Sym(X_2)^{S_2} = \Q[\Delta^2]$ which is generated by $\Delta^{2\lfloor m/2 \rfloor}$ (i.e.\ $\Delta^m$ if $m$ is even and $\Delta^{m-1}$ if $m$ is odd). We have shown that
	$$
		\Omega_{2,k} = \begin{cases} 1, & \text{if } k \equiv 0 \text{ (mod $4$) and } 4(1-m) - 2 \leq k \leq 4(\lfloor m/2 \rfloor - m) \text{, and} \\ 0, & \text{otherwise.} \end{cases}
	$$

	\bibliographystyle{abbrv}
	\bibliography{Literature}
\end{document}